\definecolor{granata}{RGB}{131,29,28}
\numberwithin{equation}{section}
\newcommand{\pre}[2]{{}^{#1} #2}
\newcommand{\set}[1]{\{ #1 \}} 
\newcommand{\setm}[2]{\{ #1 \mid #2 \}} 
\newcommand{\pow}{\mathcal{P}}
\newcommand{\dom}{\operatorname{dom}}
\renewcommand{\phi}{\varphi}
\newcommand{\OCP}{{\rm OCP}}
\newcommand{\embeds}{\sqsubseteq}
\newcommand{\analytic}{\boldsymbol{\Sigma}_1^1}
\newcommand{\Arw}[1]{\mathrm{Arw}(#1)}
\newcommand{\Hom}[2]{\mathrm{Hom}(#1,#2)}
\DeclareMathOperator{\End}{End}
\DeclareMathOperator{\Ima}{Im}
\newcommand\reallywidehat[1]{\arraycolsep=0pt\relax%
\begin{array}{c}
\stretchto{
  \scaleto{
    \scalerel*[\widthof{\ensuremath{#1}}]{\kern-.5pt\bigwedge\kern-.5pt}
    {\rule[-\textheight/2]{1ex}{\textheight}} 
  }{\textheight} %
}{1ex}\\           
#1\\                 
\rule{-1ex}{0ex}
\end{array}
}
\newenvironment{enumerate-(a)}{\begin{enumerate}[label={\upshape (\alph*)}, leftmargin=2pc]}{\end{enumerate}}
\newenvironment{enumerate-(a)-r}{\begin{enumerate}[label={\upshape (\alph*)}, leftmargin=2pc,resume]}{\end{enumerate}}
\newenvironment{enumerate-(a)-5}{\begin{enumerate}[label={\upshape (\alph*)}, leftmargin=2pc,start=5]}{\end{enumerate}}
\newenvironment{enumerate-(A)}{\begin{enumerate}[label={\upshape (\Alph*)}, leftmargin=2pc]}{\end{enumerate}}
\newenvironment{enumerate-(A)-r}{\begin{enumerate}[label={\upshape (\Alph*)}, leftmargin=2pc,resume]}{\end{enumerate}}
\newenvironment{enumerate-(i)}{\begin{enumerate}[label={\upshape (\roman*)}, leftmargin=2pc]}{\end{enumerate}}
\newenvironment{enumerate-(i)-r}{\begin{enumerate}[label={\upshape (\roman*)}, leftmargin=2pc,resume]}{\end{enumerate}}
\newenvironment{enumerate-(I)}{\begin{enumerate}[label={\upshape (\Roman*)}, leftmargin=2pc]}{\end{enumerate}}
\newenvironment{enumerate-(I)-r}{\begin{enumerate}[label={\upshape (\Roman*)}, leftmargin=2pc,resume]}{\end{enumerate}}
\newenvironment{enumerate-(1)}{\begin{enumerate}[label={\upshape (\arabic*)}, leftmargin=2pc]}{\end{enumerate}}
\newenvironment{enumerate-(1)-r}{\begin{enumerate}[label={\upshape (\arabic*)}, leftmargin=2pc,resume]}{\end{enumerate}}
\newenvironment{itemizenew}{\begin{itemize}[leftmargin=2pc]}{\end{itemize}}
\newtheorem{theorem}{Theorem}[section]
\newtheorem{lemma}[theorem]{Lemma}
\newtheorem{corollary}[theorem]{Corollary}
\newtheorem{proposition}[theorem]{Proposition}
\newtheorem{question}[theorem]{Question}
\theoremstyle{definition}
\newtheorem{definition}[theorem]{Definition}
\newtheorem{notation}[theorem]{Notation}
\theoremstyle{remark}
\newtheorem{remark}[theorem]{Remark}
\begin{document}

\title[Embeddability on torsion-free abelian groups of uncountable size]{The
complexity of the embeddability relation between  torsion-free abelian groups
of uncountable size}
\date{\today}
\author[F.~Calderoni]{Filippo Calderoni}

\address{Dipartimento di matematica \guillemotleft{Giuseppe Peano}\guillemotright, Universit\`a di Torino, Via Carlo Alberto 10, 10121 Torino --- Italy}
\email{filippo.calderoni@unito.it}

\subjclass[2010]{03E15, 20K20, 20K40}
\keywords{Borel reducibility; torsion-free abelian groups; generalized descriptive set theory}
\thanks{The author is very grateful to Adam J. Prze\'zdziecki for explaining some crucial parts of \cite{Prz14} and for some invaluable discussions which eventually lead to the proof of the main results of this paper. The author would like to thank Tapani Hyttinen for some useful comments that contributed to the last section. 
Moreover, the author thanks Andrew Brooke-Taylor, Rapha\" el Carroy, Simon Thomas, and Matteo Viale for comments and interesting discussions.
The results presented in this article will be part of the PhD thesis of the author and carried out under the supervision of Luca Motto Ros. 
This work was supported by the ``National Group for the Algebraic and Geometric Structures and their Applications'' (GNSAGA--INDAM)}
\begin{abstract} 
We prove that for every uncountable cardinal \( \kappa \) such that \(\kappa^{<\kappa}=\kappa\), the quasi-order of embeddability on the \(\kappa\)-space of \( \kappa \)-sized graphs Borel reduces to the embeddability
on the \( \kappa\)-space of \( \kappa \)-sized torsion-free abelian groups. Then we use the same techniques to prove that
the former Borel reduces to the embeddability on the \(\kappa\)-space of \( \kappa \)-sized \( R \)-modules,
 for every \(\mathbb{S}\)-cotorsion-free ring \( R \) of cardinality less than the continuum.
As a consequence we get that all the previous are complete \( \analytic \) quasi-order.
\end{abstract}
\maketitle


\section{Introduction}

A subset of a topological space is \emph{\( \kappa \)-Borel} if it is in the smallest \( \kappa \)-algebra containing the open sets.
Given two spaces $X,Y$, a function $f\colon X\to Y$ is \emph{ \(\kappa\)-Borel (measurable)} if the preimage through $f$ of every open subset of $Y$ is \( \kappa \)-Borel.
Two spaces $X,Y$ are said \emph{\( \kappa \)-Borel isomorphic} if there is a \( \kappa \)-Borel bijection $X\to Y$ whose inverse is \( \kappa \)-Borel too. When \(\kappa=\aleph_{1}\) these notions coincide with the ones of  Borel sets, Borel functions, Borel isomorphism (see the classical reference~\cite{Kec}). More background details and examples will be given in the next section.

Let \(\kappa \) be an infinite cardinal. A topological space $X$ is a \emph{\( \kappa \)-space} if it admits a basis of size $\mathop{\leq}\kappa$.
We denote by \(\pre{\kappa}{\kappa}\) the \emph{generalized Baire space}; i.e., the set of functions from \(\kappa\) to itself
endowed with the topology generated by the sets of those functions extending a
fixed function from a bounded subset of \(\kappa\) to \(\kappa\).
We assume the hypothesis \(\kappa^{<\kappa}=\kappa\)\ , which implies that \(\pre{\kappa}{\kappa}\) is a \( \kappa \)-space.
A \( \kappa \)-space is \emph{standard Borel} if it is $\kappa^+$-Borel isomorphic to a $\kappa^+$-Borel subset of $\pre{\kappa}{\kappa}$.
If $X$ is a standard Borel \( \kappa \)-space, we say that \( A\subseteq X \) is \emph{\( \kappa \)-analytic (or $\analytic$)} if it is a continuous image of a closed subset of $\pre{\kappa}{\kappa}$.
The set of
\( \kappa \)-analytic subsets of $X$ is usually denoted by $\analytic(X)$. We are interested in \emph{quasi-orders} (i.e., reflexive and transitive binary relations) defined over standard Borel \( \kappa \)-spaces.
A quasi-order \( Q \) on \( X \) is \emph{analytic}, or \( \analytic \), if and only if \( Q\in\analytic(X\times X)\).
Let \( P ,Q \) be \( \analytic \) quasi-orders on the standard Borel \(\kappa\)-spaces \( X \) and \( Y \), respectively. We say that \( P \) \emph{Borel reduces} to \( Q \) if there is a \(\kappa^{+}\)-Borel function \( f\colon X\to Y\) such that
\(
\forall x_0x_1\in X\ (x_0\mathrel{P}x_1\Leftrightarrow {f(x_0)}\mathrel{Q}{f(x_1)}).
\)

\begin{theorem}[essentially Williams~\cite{Wil14}]\label{Theorem : Wil}
For every infinite cardinal \( \kappa \) such that \( \kappa^{<\kappa}=\kappa \), the  embeddability quasi-order on the \(\kappa\)-space of \( \kappa \)-sized graphs Borel reduces to the embeddability
on the \(\kappa\)-space of \( \kappa \)-sized groups.
\end{theorem}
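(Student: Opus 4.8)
The plan is to produce an explicit assignment $\Gamma \mapsto G_{\Gamma}$ sending each $\kappa$-sized graph to a $\kappa$-sized group, and to check that it is simultaneously a $\kappa^{+}$-Borel map and a reduction for embeddability in both directions. Following Williams, I would take $G_{\Gamma}$ to be a Mekler-type encoding of $\Gamma = (V,E)$: fix an odd prime $p$ and let $G_{\Gamma}$ be the group of nilpotency class $2$ and exponent $p$ presented by generators $\{a_{v} : v \in V\}$, with $a_{v}^{p}=1$, all commutators central of order dividing $p$, and $[a_{u},a_{v}]=1$ exactly when $\{u,v\}\in E$. Since $\kappa$ is infinite, the normal forms — a finite-support vector over $\mathbb{Z}/p\mathbb{Z}$ on the generators together with a finite-support element of the commutator subgroup — number exactly $\kappa$, so $G_{\Gamma}$ is $\kappa$-sized.

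First I would verify $\kappa^{+}$-Borelness. Coding graphs and groups as points of $\pre{\kappa}{\kappa}$ via their edge relation, respectively multiplication table, on the underlying set $\kappa$, the presentation of $G_{\Gamma}$ is read off uniformly and explicitly from a code for $\Gamma$; each entry of the resulting multiplication table on a fixed canonical set of normal forms depends on only boundedly much information about $\Gamma$, so the assignment is in fact continuous, and hence certainly $\kappa^{+}$-Borel. Here the hypothesis $\kappa^{<\kappa}=\kappa$ guarantees that the space of codes is a $\kappa$-space and that the bookkeeping of normal forms can be carried out inside $\pre{\kappa}{\kappa}$.

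The forward implication is routine. If $\iota\colon\Gamma_{0}\hookrightarrow\Gamma_{1}$ is a graph embedding, i.e. an injection preserving and reflecting adjacency, then $a_{v}\mapsto a_{\iota(v)}$ respects every defining relation of $G_{\Gamma_{0}}$, so it induces a homomorphism $G_{\Gamma_{0}}\to G_{\Gamma_{1}}$; since $\iota$ is injective and reflects adjacency, reading the induced map on normal forms shows it is injective, hence a group embedding.

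The converse is the crux, and is where essentially all of Williams' work lies: from an arbitrary group embedding $\Psi\colon G_{\Gamma_{0}}\hookrightarrow G_{\Gamma_{1}}$ — which need not send generators to generators — one must extract a graph embedding $\Gamma_{0}\hookrightarrow\Gamma_{1}$. The device is the group-theoretic rigidity of the construction: one isolates, purely in the language of groups, the elements that represent vertices up to the unavoidable scalar and central ambiguity, using that in a class-$2$ exponent-$p$ group commutation detects adjacency and that centralizers recover neighbourhoods. To make vertices canonically recognizable one first composes with a preliminary $\kappa^{+}$-Borel reduction of graph embeddability to embeddability on \emph{nice} graphs — those in which distinct vertices have distinct neighbourhoods and no degenerate configurations occur; for nice graphs the vertex-structure is recoverable from the group, and $\Psi$ is forced to transport it while preserving commutation, yielding the desired embedding. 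The one thing requiring care in the present setting is that each combinatorial lemma in this recovery uses nothing about $\kappa$ beyond $\kappa^{<\kappa}=\kappa$, so that Williams' countable argument goes through verbatim at every uncountable $\kappa$; this is exactly what the qualifier ``essentially'' records.
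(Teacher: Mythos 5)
The paper does not actually reprove this statement: it invokes Williams' construction from \cite{Wil14} (Theorem 5.1 there) and observes that his proof --- which sends a graph to a group generated by its vertices subject to relators satisfying a small cancellation hypothesis, so that torsion elements encode the edge relation --- goes through verbatim for uncountable \( \kappa \) under \( \kappa^{<\kappa}=\kappa \). Your proposal replaces that construction with a Mekler-type encoding into nilpotent class-\(2\) exponent-\(p\) groups. The Borelness and forward-direction parts of your argument are fine, but the substitution introduces a genuine gap at exactly the step you yourself call the crux.

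For an \emph{isomorphism} \( G_{\Gamma_0}\to G_{\Gamma_1} \) of Mekler groups over nice graphs one can recover a graph isomorphism, because isomorphisms preserve the definable structure (centralizer sizes, the partition of non-central elements into vertex type and non-vertex type) from which the vertex set is reconstructed. An arbitrary \emph{embedding} \( \Psi\colon G_{\Gamma_0}\hookrightarrow G_{\Gamma_1} \) preserves none of this: centralizers are computed in the ambient group, and niceness of \( \Gamma_1 \) constrains automorphisms of \( G_{\Gamma_1} \), not its subgroups. All \( \Psi \) gives you is an injective linear map on the central quotients \( (\mathbb{Z}/p)^{(V_0)}\to(\mathbb{Z}/p)^{(V_1)} \) preserving and reflecting the vanishing of the commutator form; a generator \( a_v \) may land on an element such as \( a_{y}a_{z}c \) with \( y \) not adjacent to \( z \) and \( c \) central, whose commutation pattern in \( G_{\Gamma_1} \) is realized by no single vertex of \( \Gamma_1 \). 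Nothing in your sketch rules this out or explains how to extract a vertex map from such images, and passing to ``nice'' graphs does not obviously repair it, since the obstruction lives in subgroups rather than in the definable structure. This is precisely why Williams does not use Mekler's construction for embeddability and resorts to small cancellation theory, where torsion elements are forced to be conjugates of powers of relators and hence pin down the possible images of generators under any embedding. As written, your converse direction is an assertion rather than a proof; either carry out Williams' small cancellation argument and check (as the paper does) that it is insensitive to the cardinality of the vertex set, or supply a genuinely new rigidity theorem for embeddings between Mekler groups, which would be a substantial result in its own right and not a routine verification.
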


The above result was proved in~\cite[Theorem 5.1]{Wil14} for \( \kappa=\omega \), and the same proof works for \( \kappa \) uncountable as well. In view of a result of Louveau and Rosendal (see \cite[Theorem 3.1]{LouRos}), Theorem~\ref{Theorem : Wil} in case \(\kappa=\omega\) yields that
the embeddability relation on countable groups is a complete \( \analytic \) quasi-order (i.e., a maximum among all \(\analytic\) quasi-orders up to Borel reducibility).
To prove Theorem~\ref{Theorem : Wil}, Williams maps every countable graph to a group generated by the vertices of the graph and
satisfying some small cancellation hypothesis.
Such groups are not abelian and have many torsion elements that are used to encode the edge relation of the corresponding graphs.
So one may wonder whether there exists another Borel reduction from embeddability between countable graphs to embeddability on countable torsion-free abelian groups.

At about the same time as Theorem~\ref{Theorem : Wil} was proved, Prze\'zdziecki showed in \cite{Prz14} that the category of graphs almost-fully embeds into the category of abelian groups. I.e., there exists a functor \( G\colon\mathscr{G}raphs\to \mathscr{A}b \) such that for every two graphs \( T,V\) there is a natural isomorphism
\[
\mathbb{Z}[\Hom{T}{V}] \cong \Hom{GT}{GV},
\]
where \( \mathbb{Z}[B] \) is defined as the free abelian group with basis \( B \). A closer look into the construction of the functor
reveals that it takes values in the subcategory of torsion-free abelian groups. Unfortunately for us, the restriction of \(G\) to the standard Borel space of countable graphs is not a Borel reduction in the classical sense because countable graphs are
sent to groups of size the continuum.

In this paper we work in the framework of generalized descriptive set theory. By tweaking the construction of \cite{Prz14} we show the following.
\begin{theorem}\label{Theorem : main}
 For every uncountable \( \kappa \) such that \( \kappa^{<\kappa}=\kappa \)\ , there is a 
Borel
reduction from the quasi-order of embeddability 
on the \(\kappa\)-space of graphs of size \( \kappa \) to the embeddability on the \(\kappa\)-space of \(\kappa\)-sized torsion-free abelian groups.
\end{theorem}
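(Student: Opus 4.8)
The plan is to use Prze\'zdziecki's almost full embedding $G\colon \mathscr{G}raphs \to \mathscr{A}b$ itself as the reduction, after checking that in the generalized setting it enjoys the three properties of a Borel reduction: it sends $\kappa$-sized graphs to $\kappa$-sized torsion-free abelian groups, it is $\kappa^+$-Borel, and it reduces embeddability. The first two are where the hypothesis $\kappa^{<\kappa}=\kappa$ and routine bookkeeping do the work; the reduction property is where the real difficulty lies.

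For the cardinality, recall that the classical reduction fails precisely because $G$ blows up a countable graph to a group of size $2^{\aleph_0}$; inspection of the construction shows more precisely that $|GT|\leq |T|^{\aleph_0}$, the exponent $\aleph_0$ arising from the countable sequences (a completion-type ingredient) used to build $GT$ out of the vertex set of $T$. Since $\kappa$ is uncountable and $\kappa^{<\kappa}=\kappa$, we have $\kappa^{\aleph_0}=\kappa$, so $|GT|=\kappa$ for every graph $T$ of size $\kappa$, and $GT$ is torsion-free by construction. This is exactly the point at which the hypothesis is used and the obstruction of the classical case disappears. For measurability, because $G$ is given by uniform explicit formulas in the vertices and edges of $T$, one fixes once and for all a canonical bijection between $\kappa$ and the underlying set of $GT$ (indexed by the vertices of $T$ together with the auxiliary sequences) and then reads off the graph of the group operation from the adjacency relation of $T$ by a $\kappa^+$-Borel recipe; thus $T\mapsto GT$ is a $\kappa^+$-Borel map between the two standard Borel $\kappa$-spaces. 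This step is pure bookkeeping, but must be done carefully so that the code of $GT$ depends Borel-measurably on the code of $T$.

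It remains to establish $T\embeds V \Leftrightarrow GT\embeds GV$. The forward direction is soft: the isomorphism $\mathbb{Z}[\Hom{T}{V}]\cong\Hom{GT}{GV}$ is natural and multiplicative (carrying composition of graph maps to the bilinear extension of composition of group maps), so $G$ is a faithful functor that preserves monomorphisms; as injective graph homomorphisms are the monomorphisms of $\mathscr{G}raphs$ and monomorphisms of $\mathscr{A}b$ are the injective homomorphisms, a graph embedding $T\to V$ is sent to an injective homomorphism $GT\to GV$. The converse is the main obstacle, and it is \emph{not} formal: an injective homomorphism $f\colon GT\to GV$ corresponds under the isomorphism to a finite integral combination $f=\sum_i m_i\phi_i$ of graph homomorphisms $\phi_i\colon T\to V$, and a nontrivial combination of non-injective $\phi_i$ can perfectly well be injective, so the Hom-formula alone does not manufacture a graph embedding.

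I would break this obstacle by exploiting the fine structure of the (tweaked) construction so that the relevant graph homomorphisms are forced to be embeddings. Concretely, I would precompose with a Borel reduction $T\mapsto T^{\sharp}$ of graph embeddability onto a class of suitably rigidified graphs on which every homomorphism is an embedding (attaching to each vertex a pairwise non-isomorphic rigid gadget). On such graphs $\Hom{T^{\sharp}}{V^{\sharp}}$ is either empty or consists entirely of embeddings, so $f\neq 0$ (which holds because $f$ is injective and $GT^{\sharp}\neq 0$) forces $\Hom{T^{\sharp}}{V^{\sharp}}\neq\emptyset$ and hence a graph embedding $T^{\sharp}\to V^{\sharp}$, i.e.\ $T\embeds V$. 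Verifying that the combined construction still outputs $\kappa$-sized torsion-free groups and remains $\kappa^+$-Borel then finishes the proof. Throughout, the delicate point is precisely this rigidification ensuring the converse, since without it the passage from an injective group homomorphism back to a graph embedding genuinely fails.
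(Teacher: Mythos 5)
You correctly isolate the crux: under the almost-full embedding an injective homomorphism $f\colon GT\to GV$ only yields a finite combination $\sum_i k_i\phi_i$ of graph homomorphisms, and injectivity of the sum does not formally produce an injective summand. But your proposed fix --- precomposing with a rigidification $T\mapsto T^\sharp$ so that $\Hom{T^\sharp}{V^\sharp}$ consists entirely of embeddings --- is exactly where the gap lies. You never construct $T^\sharp$, and ``attaching pairwise non-isomorphic rigid gadgets to each vertex'' does not obviously deliver what you need: a graph homomorphism is only required to preserve edges, so it may fold gadgets into one another or into the host graph, identify non-adjacent vertices, and (crucially) send non-edges to edges; ruling out all of these for \emph{every} homomorphism between \emph{every} pair of rigidified $\kappa$-sized graphs, while simultaneously keeping $T\embeds V\Leftrightarrow T^\sharp\embeds V^\sharp$, is a substantial open construction, not routine bookkeeping. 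Until such a class is exhibited, the backward implication is unproved. (A smaller but real slip: in the forward direction you infer that $G$ ``preserves monomorphisms'' from faithfulness of the Hom-isomorphism; faithful functors \emph{reflect} monos but need not preserve them. The paper instead proves injectivity of $G\gamma$ for a graph embedding $\gamma$ directly, by exhibiting $G\gamma$ as the restriction of the injective map induced on $\mathbb{Z}$-adic completions.)

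The paper resolves the crux by modifying the \emph{target} construction rather than the input graphs: the ring $A$ is generated not only by $\Arw{\Gamma}\cup\{1\}$ but also by $\pow_{fin}(\omega)$, with the multiplication rule that $a\cdot d=a''d$ when $a\restriction d$ is an isomorphism and $a\cdot d=0$ otherwise. These finite sets act as embeddability detectors: if no $\phi_i$ is an embedding, each failure has a finite witness, and since $I$ is finite one finds a single finite $S\subseteq T$ on which every $\phi_i$ fails to be injective or to preserve non-edges; the nonzero element $[(d,S)]$ of $GT$, where $d$ is the vertex set of $\sigma(S)$, is then annihilated by $h=\Psi(\sum_i k_i\phi_i)$ because each $\delta^S_i\ast d=0$, contradicting injectivity of $h$. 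This is the content of Lemma~\ref{Lemma : G reduction} and it is the step your proposal replaces with an unconstructed rigidification. Your cardinality and Borel-measurability discussion is in the right spirit (the paper likewise uses $\kappa^{\aleph_0}=\kappa$, realizing $GT$ as a direct limit over $[T]^{<\omega_1}$ and coding it Borel-in-$T$ as in Lemma~\ref{Lemma : G Borel}), but as it stands the proof of the key equivalence is incomplete.
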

So,
in view of the results of \cite{Mot13} and \cite{MilMot}, the embeddability relation between \(\kappa\)-sized torsion-free abelian groups is a complete
\(\analytic\) quasi-order. 

In Section 2 we introduce the main definitions of Borel reducibility in the framework of generalized descriptive set theory. In Section 3 we recall some theorems on the existence of \( R \)-modules with prescribed endomorphism ring. Such theorems
will be used to define the reductions we present in the ensuing sections. Section 4 is dedicated to the proof of Theorem~\ref{Theorem : main}: we define a Borel reduction from the embeddability relation on \( \kappa \)-sized graphs to embeddability on \( \kappa \)-sized torsion-free abelian groups. 
In Section 5 we follow the main ideas of \cite{GobPrz}, and we exploit the techniques used in Section 4 to prove an analogue of Theorem~\ref{Theorem : main} for the embeddability relation on \( R \)-modules, for every \( \mathbb{S} \)-cotorsion-free ring \( R \) of cardinality less than the continuum.

\begin{theorem}\label{Theorem : R-mod} Let \( R \) be a commutative \( \mathbb{S} \)-cotorsion-free ring of cardinality less than the continuum. For every uncountable \( \kappa \) such that \( \kappa^{<\kappa}=\kappa \)\ ,
 there is a 
Borel
reduction from
the embeddability quasi-order on the  \(\kappa\)-space of graphs of size \( \kappa \) to embeddability on the \(\kappa\)-space of \(\kappa\)-sized \(R\)-modules.
\end{theorem}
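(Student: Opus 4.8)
The plan is to adapt the proof of Theorem~\ref{Theorem : main} from Section~4 almost verbatim, replacing the ring \( \mathbb{Z} \) by \( R \) and abelian groups by \( R \)-modules throughout. The module-theoretic input comes from the realization theorems recalled in Section~3: since \( R \) is a commutative \( \mathbb{S} \)-cotorsion-free ring, those theorems supply a stock of \emph{rigid} \( R \)-modules, i.e.\ modules whose endomorphism ring is \( R \), out of which one builds, following the main ideas of \cite{GobPrz}, a functor \( G_R \) from graphs to \( R \)-modules with the almost-full embedding property
\[
R[\Hom{T}{V}] \cong \Hom{G_R T}{G_R V}
\]
natural in the graphs \( T,V \), where \( R[B] \) denotes the free \( R \)-module with basis \( B \). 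Under this isomorphism a graph homomorphism \( f\colon T\to V \) corresponds to the basis element \( G_R(f) \), and an arbitrary module homomorphism \( G_R T\to G_R V \) corresponds to an \( R \)-linear combination \( \sum_i r_i\, G_R(f_i) \) of such basis elements.

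First I would check that \( T\mapsto G_R T \) sends \( \kappa \)-sized graphs to \( \kappa \)-sized \( R \)-modules; this is exactly where the hypotheses are used. From \( \kappa^{<\kappa}=\kappa \) and \( \aleph_0<\kappa \) one gets \( 2^{\aleph_0}\le 2^{<\kappa}\le\kappa^{<\kappa}=\kappa \), so the assumption \( |R|<2^{\aleph_0} \) yields \( |R|<\kappa \); together with \( |T|=\kappa \) and \( \kappa^{<\kappa}=\kappa \), the combinatorial construction underlying \( G_R \)---which classically inflates a countable graph into a module of size the continuum---now outputs a module of size \( \kappa \). The uniformity of the construction, already exploited in Section~4, then shows that, once graphs and \( R \)-modules are coded as points of \( \pre{\kappa}{\kappa} \), the assignment \( T\mapsto G_R T \) is \( \kappa^{+} \)-Borel.

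It remains to prove that \( T\mapsto G_R T \) is a reduction, i.e.\ that \( T\embeds V \) if and only if \( G_R T\embeds G_R V \). The forward implication is immediate: a graph embedding \( f\colon T\hookrightarrow V \) is sent to \( G_R(f) \), which one readily checks to be a module embedding, since the blocks of \( G_R T \) indexed by the vertices of \( T \) are carried identically onto the blocks of \( G_R V \) indexed by their \( f \)-images. The reverse implication is where I expect the main obstacle to lie: given a module embedding \( G_R T\hookrightarrow G_R V \), corresponding to some \( \sum_i r_i\, G_R(f_i)\in R[\Hom{T}{V}] \), one must show that its injectivity forces one of the \( f_i \) to be a graph embedding, so that \( T\embeds V \). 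As in the abelian-group case this uses the rigidity of the building blocks---each block of \( G_R T \) can only be mapped, up to scalars, into the blocks of \( G_R V \) that lie along the image of some \( f_i \)---together with the \( \mathbb{S} \)-cotorsion-freeness of \( R \), which keeps the modules reduced and torsion-free and thereby rules out injections not induced by graph embeddings.

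The two genuinely delicate points, both inherited from Section~4, are therefore: transporting the Section~3 realization theorems---classically producing continuum-sized modules---down to the \( \kappa \)-sized, \( \kappa^{+} \)-Borel setting; and the reflection of embeddings in the reverse implication, whose analysis of the injective elements of \( R[\Hom{T}{V}] \) is the real heart of the argument. The role of the \( \mathbb{S} \)-cotorsion-free hypothesis is precisely to make the Section~3 machinery applicable to \( R \), while the bound \( |R|<2^{\aleph_0} \) is what confines the whole construction to size \( \kappa \).
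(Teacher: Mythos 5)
Your overall route is the paper's: replace \( \mathbb{Z} \) by \( R \) throughout Section~4, invoke the realization theorem of Section~3 for \( R \)-algebras, and use the almost-fullness isomorphism \( R[\Hom{T}{V}]\cong\Hom{GT}{GV} \) from \cite{GobPrz} to analyze arbitrary module homomorphisms. Two points, however, are off, and the second is a genuine gap. First, the module-theoretic input is not ``a stock of rigid \( R \)-modules with endomorphism ring \( R \)'': Theorem~\ref{prop : GobTrl} is applied \emph{once}, to the single free \( R \)-algebra \( A=R[\Arw{\Gamma}\cup\set{1}\cup\pow_{fin}(\omega)] \) built on a skeleton \( \Gamma \) of countable graphs, producing one \( R \)-module \( M \) with \( \End_R M\cong A \); the functor is then \( C\mapsto id_C\ast M \) and \( GT=\varinjlim_{S\in[T]^{<\omega_1}}G_{\sigma(S)} \), exactly as in Section~4. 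The \( \mathbb{S} \)-cotorsion-freeness of \( R \) and the bound \( |R|<2^{\aleph_0} \) are consumed entirely by this one application of Theorem~\ref{prop : GobTrl}; they play no role in the reduction argument itself.

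Second, and more importantly, your account of the reverse implication does not close it. Rigidity and cotorsion-freeness do not ``rule out injections not induced by graph embeddings'': almost-fullness already tells you that every homomorphism \( GT\to GV \) has the form \( \Psi(\sum_i k_i\phi_i) \) for graph \emph{homomorphisms} \( \phi_i \), and the problem is to show that injectivity of this sum forces some \( \phi_i \) to be a graph embedding. The idea that does this --- and the reason \( \pow_{fin}(\omega) \) is added to the generating set of \( A \), which is the paper's one modification of \cite{Prz14} --- is the detector argument of Lemma~\ref{Lemma : G reduction}: if no \( \phi_i \) embeds \( T \) into \( V \), directedness of \( [T]^{<\omega_1} \) yields a single finite \( S\subseteq T \) on which every \( \phi_i \) fails to be an embedding; taking \( d \) to be the vertex set of \( \sigma(S) \), viewed as an element of \( G_{\sigma(S)} \), the multiplication rule \eqref{eq : multiplication} gives \( \delta^S_i\ast d=0 \) for every \( i \) (since \( \delta^S_i\restriction d \) is not an isomorphism onto its image), whence the image of the nonzero class \( [(d,S)] \) is \( 0 \), contradicting injectivity. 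Without this mechanism the reverse implication is not established; it is precisely the step your sketch flags as the ``real heart'' but leaves open.
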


In Section 6 we address the problem of determining the Borel complexity of
\( \cong^{\kappa}_\mathsf{TFA} \) the isomorphism on \( \kappa \)-sized torsion-free abelian groups. We point out 
that a result of \cite{HytMor} implies the following.

\begin{theorem}\label{Theorem : iso}
Assume that \( \mathrm V=\mathrm L \) and \( \kappa \) is inaccessible. Then
the isomorphism relation 
on \( \kappa \)-sized torsion-free abelian groups
is a
complete \( \analytic \) equivalence relation.
\end{theorem}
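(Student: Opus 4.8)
The plan is to derive Theorem~\ref{Theorem : iso} by combining the reduction constructed in Section~4 with a completeness result for graph isomorphism in the generalized setting. Write $\cong^{\kappa}_{\mathsf{Graphs}}$ for the isomorphism relation on $\kappa$-sized graphs. The first step is to observe that the assignment $T\mapsto GT$ underlying Theorem~\ref{Theorem : main}, which Section~4 shows to be a Borel reduction for embeddability, is simultaneously a Borel reduction from $\cong^{\kappa}_{\mathsf{Graphs}}$ to $\cong^{\kappa}_{\mathsf{TFA}}$. One implication is essentially free: $G$ is a functor, so a graph isomorphism $T\to V$ together with its inverse are sent to mutually inverse group homomorphisms, and hence $T\cong V$ implies $GT\cong GV$.

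The content lies in the converse, i.e.\ in showing that $G$ reflects isomorphisms. Given mutually inverse group homomorphisms $\alpha\colon GT\to GV$ and $\beta\colon GV\to GT$, I would transport them along the natural isomorphism $\mathbb{Z}[\Hom{T}{V}]\cong\Hom{GT}{GV}$ and write $\alpha=\sum_i a_i f_i$ and $\beta=\sum_j b_j g_j$ with $a_i,b_j\in\mathbb{Z}\setminus\{0\}$ and with $f_i\in\Hom{T}{V}$, $g_j\in\Hom{V}{T}$ pairwise distinct. Since the natural isomorphism is compatible with composition, the identities $\beta\alpha=\id_{GT}$ and $\alpha\beta=\id_{GV}$ translate into $\sum_{i,j}a_ib_j\,(g_jf_i)=\id_T$ in $\mathbb{Z}[\Hom{T}{T}]$ and $\sum_{i,j}a_ib_j\,(f_ig_j)=\id_V$ in $\mathbb{Z}[\Hom{V}{V}]$. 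The right-hand sides are single basis vectors, so the augmentation $\sum_k c_k h_k\mapsto\sum_k c_k$ forces $\sum_i a_i=\sum_j b_j=\pm1$, while the rigidity built into Prze\'zdziecki's construction---the same analysis by which Section~4 extracts a graph morphism from a group morphism---rules out nontrivial cancellation among the composites $g_jf_i$ and $f_ig_j$. Together these should force the combinations to be monomial, so that $\alpha$ and $\beta$ arise from a single mutually inverse pair $f_{i_0},g_{j_0}$ of graph homomorphisms, and $f_{i_0}\colon T\to V$ is a graph isomorphism. I expect this reflection step to be the main obstacle, as it is exactly where one must preclude a genuine integer combination producing a group isomorphism with no underlying graph isomorphism; it is, however, a two-sided refinement of the extraction already carried out for embeddability in Section~4.

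With $\cong^{\kappa}_{\mathsf{Graphs}}\leq_B\cong^{\kappa}_{\mathsf{TFA}}$ in hand, I would invoke the result of Hyttinen--Moreno \cite{HytMor}: under the assumptions $\mathrm V=\mathrm L$ and $\kappa$ inaccessible, the isomorphism relation on $\kappa$-sized graphs is a complete $\analytic$ equivalence relation. Composing the two reductions shows that every $\analytic$ equivalence relation Borel reduces to $\cong^{\kappa}_{\mathsf{TFA}}$. Finally, $\cong^{\kappa}_{\mathsf{TFA}}$ is itself $\analytic$, since the assertion that two $\kappa$-sized torsion-free abelian groups are isomorphic amounts to the existence of a bijection of their underlying sets preserving the group operation, and is therefore a continuous image of a closed subset of the appropriate product of copies of $\pre{\kappa}{\kappa}$; hence $\cong^{\kappa}_{\mathsf{TFA}}$ is a maximum among $\analytic$ equivalence relations, that is, a complete $\analytic$ equivalence relation.
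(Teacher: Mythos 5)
Your first step---that the functor \(G\) of Section~4 is simultaneously a Borel reduction from \(\cong^{\kappa}_{\mathsf{GRAPHS}}\) to \(\cong^{\kappa}_{\mathsf{TFA}}\)---is precisely what the paper does \emph{not} know how to prove. Section~6 states explicitly that it is open whether \(G\) reduces isomorphism (this is Prze\'zdziecki's own unanswered question about his functor), and Question~\ref{question : iso_{TFA}} records that the existence of \emph{any} Borel reduction from graph isomorphism to \(\cong^{\kappa}_{\mathsf{TFA}}\) is open, even for \(\kappa=\omega\). The step you flag as ``the main obstacle'' is therefore a genuine gap, not a two-sided refinement of the embeddability argument. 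Concretely: reading off coefficients of \(\sum_{i,j}a_ib_j(g_jf_i)=\id_T\) in \(\mathbb{Z}[\Hom{T}{T}]\) only tells you that the coefficient of \(\id_T\) is \(1\), hence that \emph{some} pair satisfies \(g_{j}f_{i}=\id_T\); likewise some (possibly unrelated) pair satisfies \(f_{i'}g_{j'}=\id_V\). Nothing forces these two pairs to involve the same \(f\), so at best you obtain that \(T\) and \(V\) are bi-embeddable, which for \(\kappa\)-sized graphs does not imply isomorphism. The appeal to ``rigidity ruling out nontrivial cancellation'' has no content as stated: distinct pairs \((i,j)\) can produce equal composites \(g_jf_i\), and the extraction argument of Lemma~\ref{Lemma : G reduction} gives no control over such cancellations.

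The paper's actual proof is entirely different and bypasses graphs altogether: it invokes the Hyttinen--Moreno result (Theorem~\ref{Theorem : HytMor}) that for inaccessible \(\kappa\) and any stable theory with the orthogonal chain property \(\OCP\), the relation \(E^{\kappa}_{\omega}\) reduces continuously to isomorphism of \(\kappa\)-sized models of that theory; it applies this to the theory of \(\mathbb{Z}_p\), which is stable, has \(\OCP\), and whose models are torsion-free abelian groups; and it concludes with the Hyttinen--Kulikov theorem that under \(\mathrm V=\mathrm L\) the relation \(E^{\kappa}_{\omega}\) is a complete \(\analytic\) equivalence relation for inaccessible \(\kappa\). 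To salvage your outline you would have to either actually prove the isomorphism-reflection property of \(G\) (thereby answering the open question) or replace your first step with a reduction of this model-theoretic kind.
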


\section{Preliminaries}
We consider the \emph{generalized Baire space} $\pre{\kappa}{\kappa}\coloneqq\setm{x}{x\colon\kappa\to\kappa}$ for an uncountable cardinal \( \kappa \).
Unless otherwise specified, $\pre{\kappa}{\kappa}$ is endowed with the \emph{bounded topology} \( \tau_b \), i.e., the topology generated by the basic open sets
\[
N_s=\setm{x\in{\pre{\kappa}{\kappa}}}{x\supseteq s},
\]
where \( s\in\pre{<\kappa}{\kappa} \).
Notice that since \( \kappa \) is uncountable, the topology $\tau_b$ is strictly finer than the product topology. We assume the hypothesis
\begin{equation}\label{equation : k<kequalsk}
\kappa^{<\kappa}=\kappa,
\end{equation}
consequently we get that $\pre{\kappa}{\kappa}$ is a \( \kappa \)-space, as the basis
\(\setm{N_{s}}{s\in\pre{<\kappa}{\kappa}}\) has size \(\kappa\).
 The \emph{generalized Cantor space} $\pre{\kappa}{2}\coloneqq\setm{x\in\pre{\kappa}{\kappa}}{x\colon\kappa\to 2}$ is a closed subset of $\pre{\kappa}{\kappa}$ and therefore it is standard Borel with the relative topology.
We recall the following proposition which gives some characterizations of \( \kappa \)-analytic sets.
\begin{proposition}\label{Prop : analytic}
Let $X$ be a standard Borel \( \kappa \)-space and  $A \subseteq X$ nonempty. Then, the following are equivalent:
\begin{enumerate-(i)}
\item  $A$ is \( \kappa \)-analytic;
\item $A$ is a continuous image of some \(\kappa^{+}\)-Borel \( B\subseteq \pre{\kappa}{\kappa} \);
\item $A$ is a \(\kappa^{+}\)-Borel image of some \(\kappa^{+}\)-Borel \( B\subseteq \pre{\kappa}{\kappa} \);
\item \label{cond : 4ana} $A$ is the projection $p(F) = \setm{x\in X}{\exists y \in \pre{\kappa}{\kappa}\ ((x,y)\in F)}$ of some closed subset $F \subseteq X\times\pre{\kappa}{\kappa}$.
\end{enumerate-(i)}
\end{proposition}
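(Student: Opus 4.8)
The plan is to run the cycle of implications (i) $\Rightarrow$ (ii) $\Rightarrow$ (iii) $\Rightarrow$ (iv) $\Rightarrow$ (i), where the first two arrows are essentially definitional and all the substance is concentrated in a single \emph{unfolding lemma}: every $\kappa^{+}$-Borel subset of a standard Borel $\kappa$-space is the projection of a closed set, equivalently a continuous image of a closed subset of $\pre{\kappa}{\kappa}$. For (i) $\Rightarrow$ (ii) I note that a closed set is $\kappa^{+}$-Borel, so the witnessing continuous surjection from a closed $C\subseteq\pre{\kappa}{\kappa}$ already witnesses (ii); for (ii) $\Rightarrow$ (iii) I use that continuous maps are $\kappa^{+}$-Borel measurable. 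Both are immediate.

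The technical core, and the step I expect to be the main obstacle, is the unfolding lemma. I would prove it by transfinite induction on the $\kappa^{+}$-Borel hierarchy. The base case is that open and closed sets are $\kappa$-analytic: a closed set is trivially a continuous image of itself, while an open $U=\bigcup_{i<\kappa}N_{s_i}\subseteq\pre{\kappa}{\kappa}$ is the projection to the first coordinate of the closed set $\{(x,y): x\in N_{s_{y(0)}}\}\subseteq\pre{\kappa}{\kappa}\times\pre{\kappa}{\kappa}$. The difficulty is that the class of $\kappa$-analytic sets is \emph{not} closed under complementation, so the induction cannot be pushed through the hierarchy directly. The standard remedy is to pass to the \emph{bi-analytic} sets: I would show that the family of $A$ for which both $A$ and its complement are $\kappa$-analytic is a $\kappa^{+}$-complete algebra of sets containing the open sets, hence contains every $\kappa^{+}$-Borel set. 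This reduces the lemma to two closure properties, namely that $\kappa$-analytic sets are closed under unions and intersections of length $\leq\kappa$. For unions one forms the disjoint sum of the witnessing closed sets, using that $\kappa$ with the discrete topology is homeomorphic to the closed set of constant sequences in $\pre{\kappa}{\kappa}$ and that $\kappa^{<\kappa}=\kappa$ supplies a homeomorphism $\pre{\kappa}{\kappa}\times\pre{\kappa}{\kappa}\cong\pre{\kappa}{\kappa}$; for intersections one takes the closed equalizer $\{(y_i)_{i<\kappa}: \forall i,j\ g_i(y_i)=g_j(y_j)\}$ inside $\prod_{i<\kappa}C_i$, which is closed because the spaces in play are Hausdorff.

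Granting the unfolding lemma, the last two arrows follow. For (iii) $\Rightarrow$ (iv), given $A=f(B)$ with $B$ and $f$ both $\kappa^{+}$-Borel, I consider the graph $\Gamma_f=\{(f(y),y):y\in B\}$, a $\kappa^{+}$-Borel subset of $X\times\pre{\kappa}{\kappa}$ projecting onto $A$; unfolding $\Gamma_f$ inside $X\times\pre{\kappa}{\kappa}$ realizes it as the projection of a closed $F'\subseteq X\times\pre{\kappa}{\kappa}\times\pre{\kappa}{\kappa}$, and merging the two $\pre{\kappa}{\kappa}$-coordinates via $\pre{\kappa}{\kappa}\times\pre{\kappa}{\kappa}\cong\pre{\kappa}{\kappa}$ yields the closed $F\subseteq X\times\pre{\kappa}{\kappa}$ demanded by (iv). For (iv) $\Rightarrow$ (i), the projection $p\colon X\times\pre{\kappa}{\kappa}\to X$ is continuous, so $A=p(F)$ already exhibits $A$ as a continuous image of the closed set $F$; it then remains to present $F$ as a continuous image of a closed subset of $\pre{\kappa}{\kappa}$. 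This last reconciliation is where the topological hypotheses enter, since the definition of standard Borel only furnishes a $\kappa^{+}$-Borel isomorphism, which need not be a homeomorphism. For the spaces of $\kappa$-sized graphs, groups, and modules considered in this paper, $X$ is literally a closed subspace of $\pre{\kappa}{\kappa}$, so $F$ is closed in $\pre{\kappa}{\kappa}\times\pre{\kappa}{\kappa}\cong\pre{\kappa}{\kappa}$ and the transfer is immediate; in general one records this as a mild regularity assumption on the ambient $\kappa$-space.
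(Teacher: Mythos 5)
The paper does not actually prove Proposition~\ref{Prop : analytic}: it defers entirely to \cite[Section 3]{Mot13}, so there is no in-text argument to compare against. Your reconstruction is essentially the standard proof given there: the two definitional arrows, the unfolding lemma as the technical core, and the bi-analytic $\kappa^{+}$-complete algebra as the device for circumventing the failure of closure under complements. Two points deserve tightening. First, in the closure-under-$\kappa$-intersections step, $\prod_{i<\kappa}\pre{\kappa}{\kappa}$ with the \emph{product} topology is not homeomorphic to $\pre{\kappa}{\kappa}$ with the bounded topology; you should work in $\pre{\kappa\times\kappa}{\kappa}$ with the bounded topology (which, under $\kappa^{<\kappa}=\kappa$, is the ${<}\kappa$-supported box refinement of the product topology) and observe that the equalizer, being closed in the coarser product topology, remains closed there, and that the evaluation map $(y_i)_{i<\kappa}\mapsto g_0(y_0)$ remains continuous; the conclusion survives, but as written the identification is not literally a homeomorphism. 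Second, the issue you flag at the end --- that ``standard Borel'' only supplies a $\kappa^{+}$-Borel isomorphism, so conditions (i), (ii), (iv), which mention the topology of $X$, do not transfer for free --- is genuine and is the one place where your proof is not self-contained; you also quietly need it earlier, in (iii) $\Rightarrow$ (iv), where the $\kappa^{+}$-Borelness of the graph $\Gamma_f$ and the applicability of the unfolding lemma to subsets of $X\times\pre{\kappa}{\kappa}$ both presuppose that $X$ is realized (up to the relevant notion of isomorphism) as a $\kappa^{+}$-Borel subspace of $\pre{\kappa}{\kappa}$ with its subspace topology. For every space used in this paper that realization is available, so your argument is adequate for the intended applications, but a complete proof of the proposition as stated should either build that reduction to the case $X\subseteq\pre{\kappa}{\kappa}$ into the argument or adopt the purely Borel-theoretic formulation of analyticity used in \cite{Mot13}.
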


A proof of Proposition \ref{Prop : analytic} is given in \cite[Section 3]{Mot13}.
It is specially worth to note that in view of \ref{cond : 4ana} we are allowed to use a generalization of the celebrated Tarski-Kuratowski algorithm (see~\cite[Appendix~C]{Kec}).
That is, a set \( A \subseteq \pre{\kappa}{\kappa}\) is \( \kappa \)-analytic if it is defined
by an expression involving only \( \kappa \)-analytic sets, atomic connectives,
\( \exists \alpha, \forall \alpha \) (where \( \alpha \) varies over a set of cardinality
\( \leq \kappa\)), and existential quantification over a standard Borel \( \kappa \)-space.

In the remainder of this paper we fix some uncountable \( \kappa \) and study standard Borel \( \kappa \)-spaces with the assumption \( \kappa^{<\kappa}=\kappa \). 
For ease of exposition we simply say Borel and analytic instead of saying respectively \( \kappa^+ \)-Borel and \( \kappa \)-analytic, whenever \( \kappa \) is clear from the context.

\subsection{Spaces of \( \kappa \)-sized structures}

In this subsection we recall briefly how to define the standard Borel \( \kappa \)-spaces of uncountable structures of size \( \kappa \). While in~\cite{AndMot,FriHytKul,Mot13}
the authors are concerned only with countable languages, we
extend the basic definitions to uncountable ones.
Our approach is motivated by the aim to
develop a unique framework to treat algebraic objects
with the most diverse features.
For example, following our approach it is possible to define the standard Borel \( \kappa \)-space of \( \kappa \)-sized \( R \)-modules, for any fixed ring \( R \) with \( |R|<\kappa \). Such spaces will be taken into account in Section \ref{Section5}.

\begin{definition}\label{Fact : bounded topology}
If \( A\) is a set of size \( \kappa \), then any bijection \( f\colon\kappa \to A \) induces a bijection from \( \pre{\kappa}{\kappa} \) to \( \pre{A}{\kappa} \), so that the \emph{bounded topology} can be copied on \( \pre{A}{\kappa} \). A basis for such topologies is given by
\[
\setm{N_s^A }{ \exists\alpha < \kappa\ (f''\alpha = \dom s)},
\]
where \( N^A_s =\setm{x\in\pre{A}{\kappa}}{s\subseteq x} \).
\end{definition}

 We briefly recall some useful applications of Definition~\ref{Fact : bounded topology}.

\begin{enumerate-(a)}
\item If \( G \) is a group of cardinality \( \kappa \) then, we define the \emph{\( \kappa \)-space of subgroups of \( G\)} by identifying
each subgroup of \( G \) with its characteristic function and setting
\[
\mathrm{Sub}_{G}=\setm{H\in\pre{G}{2}}{1_G\in H\wedge\forall x,y\in G\ (x,y\in H \to xy^{-1}\in H)},
\] which is a closed subset of
\( \pre{G}{2} \) and therefore is standard Borel.

\item Fix a language  consisting of finitary relation symbols \(L=\setm{R_i}{ i \in I}$, $|I| < \kappa\), and let \( n_i \) be the arity of \( R_i \).
We denote by \( X_L^\kappa \) \emph{the \(\kappa\)-space of \( L \)-structures with domain \( \kappa \)}. Every \( \mathcal A \in X_L^\kappa \) is a pair
\( (\kappa, \setm{R_i^\mathcal{A} }{ i \in I}) \) where each \( R_i^\mathcal{A} \) is an \( n_i \)-ary relation on \( \kappa \), so
it can be identified with an element of \( \prod_{i\in I} \pre{(\pre{n_i}{\kappa})}{2} \) in the obvious way. It follows that \( X_L^\kappa \) can be endowed with the product of the bounded topologies on its factors \( \pre{(\pre{n_i}{\kappa})}{2} \).
\end{enumerate-(a)}

For an infinite cardinal \( \kappa \), we consider the infinitary logic
\( L_{\kappa^+\kappa} \). In such logic formulas are defined inductively
with the usual formation rules for terms, atomic formulas, negations, disjunctions and conjunctions of size \( \leq \kappa \), and quantifications over less than \( \kappa \) many variables.

\begin{definition}\label{definition : space of models}
Given an infinite cardinal \( \kappa \) and an \( L_{\kappa^{+}\kappa} \)-sentence \( \phi\), we define \emph{the \(\kappa\)-space of \( \kappa \)-sized models of $\phi\)} by
\begin{equation*}
X^\kappa_\phi \coloneqq \setm{\mathcal{A}\in X_L^\kappa}{\mathcal{A}\models \phi}.
\end{equation*}
\end{definition}
The following theorem is a generalization of a classical result by L\'opez-Escobar for spaces of uncountable structures.

\begin{theorem}[$\kappa^{<\kappa} = \kappa$]\label{Theorem : LopezEscobar}
A set $B \subseteq X^\kappa_L$ is Borel and closed under isomorphism if and only if there is an $L_{\kappa^+\kappa}$-sentence $\phi$ such that $B = X^\kappa_\phi$.
\end{theorem}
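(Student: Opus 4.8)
The statement is the generalized López--Escobar theorem, and it splits into two implications of very different character: the forward direction (axiomatizable $\Rightarrow$ Borel and isomorphism-closed) is routine, while the converse is the substance.

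For the easy direction, note first that satisfaction of an $L_{\kappa^+\kappa}$-sentence is preserved under isomorphism, so $X^\kappa_\phi$ is automatically closed under isomorphism; it remains to see it is Borel. I would prove, by induction on the construction of formulas, the stronger statement that for every formula $\psi$ and every assignment of its free variables to elements of $\kappa$ the set of $\mathcal A \in X^\kappa_L$ satisfying $\psi$ under that assignment is $\kappa^+$-Borel. Atomic formulas give basic clopen sets by the definition of the product of bounded topologies on $X^\kappa_L$; negation corresponds to complementation; conjunctions and disjunctions of size $\leq\kappa$ correspond to intersections and unions of size $\leq\kappa$, which remain $\kappa^+$-Borel. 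The only place the cardinal arithmetic enters is the quantifier clause: the set defined by $\exists\bar v\,\psi(\bar v,\bar w)$ is the union, over the $\kappa^{<\kappa}=\kappa$ many assignments of the fewer-than-$\kappa$ quantified variables, of the sets defined by $\psi$, hence a union of at most $\kappa$ Borel sets and so Borel. Specializing $\psi$ to a sentence finishes this direction.

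For the hard direction, I would adapt the Vaught-transform proof of the classical theorem to the $\kappa$-setting. The group $\mathrm{Sym}(\kappa)$ acts on $X^\kappa_L$ with the isomorphism relation as its orbit equivalence relation, so ``closed under isomorphism'' means invariant under this action. For injective $s\in\pre{<\kappa}{\kappa}$ let $V_s$ be the basic open set of permutations extending $s$, and for $A\subseteq X^\kappa_L$ define the Vaught transforms $A^{\triangle V_s}$ and $A^{* V_s}$ using ``$\kappa$-nonmeagerly many'' and ``$\kappa$-comeagerly many'' $\sigma\in V_s$, where $\kappa$-meager means a union of $\leq\kappa$ nowhere dense sets. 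The assumption $\kappa^{<\kappa}=\kappa$ is exactly what makes these transforms well behaved, via the $\kappa$-Baire category theorem for $\mathrm{Sym}(\kappa)$: the two transforms are dual, with $(X^\kappa_L\setminus A)^{* V_s}=X^\kappa_L\setminus A^{\triangle V_s}$; the $\triangle$-transform commutes with unions of size $\leq\kappa$; one has the decomposition $A^{\triangle V_s}=\bigcup_{\beta<\kappa}A^{\triangle V_{s^\frown\beta}}$; and an invariant $A$ satisfies $A=A^{\triangle V_\emptyset}$. The core is then a transfinite induction on the $\kappa^+$-Borel rank of $A$, establishing: for every $\kappa^+$-Borel $A$ and every injective $s$ of length $\alpha<\kappa$ there is an $L_{\kappa^+\kappa}$-formula $\phi_A^\alpha$ with free variables $(v_\xi)_{\xi<\alpha}$ such that $\mathcal A\models\phi_A^\alpha[s]$ iff $\mathcal A\in A^{\triangle V_s}$, for all $\mathcal A$ and all injective $s$ of length $\alpha$. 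Basic clopen sets are handled by quantifier-free formulas; complementation uses the duality together with the passage from $A^{* V_s}$ to $\neg\phi^\alpha_{X^\kappa_L\setminus A}$; unions of size $\leq\kappa$ become disjunctions of size $\leq\kappa$; and the decomposition $A^{\triangle V_s}=\bigcup_\beta A^{\triangle V_{s^\frown\beta}}$ produces exactly one existential quantifier together with a $\kappa$-sized disjunction, keeping us inside $L_{\kappa^+\kappa}$. Applying this to an isomorphism-closed Borel $B$ with $\alpha=0$ yields a sentence $\phi\coloneqq\phi_B^0$ with $X^\kappa_\phi=B^{\triangle V_\emptyset}=B$.

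I expect the main obstacle to be setting up the $\kappa$-category machinery so that all of these transform identities hold verbatim: establishing the $\kappa$-Baire category theorem for $\mathrm{Sym}(\kappa)$ (which is where $\kappa^{<\kappa}=\kappa$ is indispensable, both for the whole space being non-$\kappa$-meager and for the comeager transform to dualize correctly) and verifying that the $\triangle$-transform commutes with $\kappa$-sized unions. One must also check that the induction is legitimate along the length-$\kappa^+$ Borel hierarchy and that the base case is matched correctly to the product of bounded topologies on $X^\kappa_L$.
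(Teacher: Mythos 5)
The paper does not prove this theorem itself: it only cites \cite[Theorem 24]{FriHytKul} and \cite[Theorem 8.7]{AndMot}, so there is no in-paper proof to compare against. Your sketch --- the routine induction on formulas for the easy direction, and the Vaught-transform argument over the $\mathrm{Sym}(\kappa)$-action (with $\kappa$-meagerness, the $\kappa$-Baire category theorem from $\kappa^{<\kappa}=\kappa$, and induction along the length-$\kappa^{+}$ Borel hierarchy) for the converse --- is exactly the standard generalization used in those references, and the technical points you flag (the category machinery for $\mathrm{Sym}(\kappa)$ and the transform identities) are indeed the places where the details must be checked.
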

To see a proof of Theorem \ref{Theorem : LopezEscobar} we refer the reader to \cite[Theorem~24]{FriHytKul} or \cite[Theorem~8.7]{AndMot}.
A straightforward consequence of it is that the
space defined in Definition \ref{definition : space of models} is standard Borel.

We conclude this subsection with a list of those spaces of models of \( L_{\kappa^{+}\kappa}\)-sentences that we will use in the ensuing sections.
We denote by \( X_\mathsf{GRAPHS}^\kappa\) the \emph{space of \( \kappa \)-sized graphs}. By graph we mean an undirected graph whose edge relation is irreflexive.
We denote by \(X_\mathsf{TFA}^\kappa\) the \emph{space of \(\kappa\)-sized torsion-free abelian groups}.
 For any fixed ring \( R \) such that \( |R|<\kappa \),
we denote by \( X^\kappa_{R\text{-}\mathsf{MOD}} \), the \emph{space of \( \kappa \)-sized \( R \)-modules}. Here observe that
every \( r\in R \) is regarded as a unary functional symbol, interpreted as the left scalar multiplication by \( r \).
The axioms of \( R \)-modules are the following
\begin{itemizenew}
\item \( \phi_\mathsf{AB} \), i.e., the first order formula defining abelian groups,
\item \( \forall x\forall y\ \big( r(x+y)=rx+ry \big) \),
\item \( \forall x\ \big( (r+q)x=rx+qx \big) \),
\item \( \forall x\ \big ( r(qx)=(rq)x \big )\),
\item \( 1x=x \).
\end{itemizenew}
Since \(r\) and \(q\) vary in \(R\), which has size \(<\kappa\), the formula defining the class of \(R\)-modules is a formula in the logic \(L_{\kappa^{+}\kappa}\).

\subsection{Borel reducibility}
Let \( X,Y \) be standard Borel \( \kappa \)-spaces and
 \(L \) a fixed language such that \(|L|<\kappa\).
Given 
 \( \mathcal{A},\mathcal{B}\in X^\kappa_L \), we say that \( \mathcal{A}\) \emph{is embeddable} into \(\mathcal{B}\), in symbols \(\mathcal A \embeds^{\kappa}_L \mathcal B\), if there is \( x\in \pre{\kappa}{\kappa} \) which realizes an isomorphism between \( \mathcal A\) and \( \mathcal{B} \restriction \Ima x \). As pointed out in \cite[Section 7.2.2]{AndMot} one can show directly that \( \embeds^{\kappa}_L \)  is the projection on \( X_L^\kappa\times X_L^\kappa\) of a closed subset of \(X_L^\kappa\times X_L^\kappa\times \pre{\kappa}{\kappa}\), therefore
 the quasi-order \(\embeds^{\kappa}_L \) of embeddability between \(\kappa\)-sized \(L\)-structures is \( \analytic \). 
 We denote by \( \embeds_\mathsf{Z}^{\kappa} \) the quasi-order of embeddability on the \(\kappa\)-space \( X^{\kappa}_\mathsf{Z}\), where \(\mathsf{Z}\in\set{\mathsf{GRAPHS},\mathsf{TFA},R\mathsf{\text{-}MOD}}\).
 
 Let \( P ,Q \) be \( \analytic \) quasi-orders on the standard Borel \(\kappa\)-spaces \( X \) and \( Y \), respectively.
Recall that \( P \) \emph{Borel reduces} to \( Q \), in symbols \( P\leq_B Q \), if and only if there is a \(\kappa^{+}\)-Borel function \( f\colon X\to Y\) such that
\(
\forall x_0x_1\in X\ (x_0\mathrel{P}x_1\Leftrightarrow {f(x_0)}\mathrel{Q}{f(x_1)}).
\)
Moreover, \( Q \) is a \emph{complete \( \analytic \) quasi-order} if for every \( \analytic \) quasi-order \( P \) on a standard Borel \( \kappa \)-space,
\( P \leq_B Q \). 
 Similarly, we say that  \( E \) is
a \emph{complete \( \analytic \) equivalence relation} if \(F \leq_B E \), for every \( \analytic \) equivalence relation \( F \).
Any quasi-order \( Q \) on \( X \)
 induces canonically an equivalence relation on \( X \), which is denoted by \( E_Q \), and defined by setting \(x \mathrel{E_Q} y \) if and only if
\( x\mathrel{Q}y \) and \(y\mathrel{Q}x \) for all \( x,y\in X\).
It can be easily verified that if \( Q \) is a complete \( \analytic \) quasi-order then \( E_Q \) is a complete \( \analytic \) equivalence relation.
The following theorem is a generalization of \cite[Theorem 3.1]{LouRos} to the embeddability relation on uncountable graphs.

\begin{theorem}[Mildenberger-Motto Ros~{\cite{MilMot}}]\label{Theorem : MilMot}
If \( \kappa \) is uncountable such that \( \kappa^{<\kappa} = \kappa \), then the relation of embeddability \( \embeds^\kappa_\mathsf{GRAPHS} \)
on the \(\kappa\)-space of \(\kappa\)-sized graphs is a complete \( \analytic \) quasi-order.
\end{theorem}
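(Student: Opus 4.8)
The statement is the generalized Louveau--Rosendal completeness theorem, and since graph embeddings are closed under composition and contain the identity, $\embeds^\kappa_\mathsf{GRAPHS}$ is automatically a quasi-order; all the work lies in the faithfulness of a reduction. The plan is to fix an arbitrary $\analytic$ quasi-order $Q$ on a standard Borel $\kappa$-space $X$ and construct a Borel map $x \mapsto \mathcal{G}_x \in X^\kappa_\mathsf{GRAPHS}$ with $x \mathrel{Q} y \iff \mathcal{G}_x \embeds^\kappa_\mathsf{GRAPHS} \mathcal{G}_y$. First I would put $Q$ into a tree normal form. Replacing $X$ by a $\kappa^+$-Borel copy inside $\pre\kappa\kappa$ and applying Proposition~\ref{Prop : analytic}\ref{cond : 4ana}, write $Q$ as a projection $x \mathrel{Q} y \iff \exists z \in \pre\kappa\kappa\ (x,y,z)\in F$ with $F$ closed. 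As closed subsets of the generalized Baire space are exactly the sets of length-$\kappa$ branches of subtrees of $\pre{<\kappa}{\kappa}$, the set $F$ is coded by a tree $T$ on $\kappa\times\kappa\times\kappa$, and $x \mathrel{Q} y$ holds iff the section tree $T_{x,y}=\setm{s}{(x\!\restriction\!\lh s,\, y\!\restriction\!\lh s,\, s)\in T}$ carries a branch of length $\kappa$. The hypothesis $\kappa^{<\kappa}=\kappa$ ensures there are only $\kappa$ bounded approximations to bookkeep, which is what will ultimately make the map Borel.

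Next I would turn the specialized tree into a rigid graph. For each $x$, assemble $\mathcal{G}_x$ from (i) a backbone of order type $\kappa$ that hard-codes the first coordinate $x$, decorated at each level by rigid ``flags'' that identify the level and make the backbone asymmetric, so that any embedding is forced to send the backbone of $\mathcal{G}_x$ into that of $\mathcal{G}_y$ level by level; and (ii) a system of ``verification gadgets'' indexed by the nodes of $T$ consistent with the hard-coded $x$, attached to the backbone at the corresponding level and wired so that a gadget may be mapped only to a gadget at the same level whose node lies above it in the tree order $\subseteq$. The design goal is that an embedding $\mathcal{G}_x \embeds^\kappa_\mathsf{GRAPHS} \mathcal{G}_y$ is compelled by the rigidity in (i)--(ii) to read the second coordinate off the backbone of $\mathcal{G}_y$ and to align the verification gadgets along a single $\subseteq$-increasing sequence $\langle s_\xi : \xi<\kappa\rangle$ of nodes of $T_{x,y}$, i.e.\ along a branch.

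The verification then splits into the two expected directions. If $x \mathrel{Q} y$ then a branch $z$ through $T_{x,y}$ is given, and I would define the embedding explicitly: map the backbone of $\mathcal{G}_x$ onto that of $\mathcal{G}_y$ by the identity on levels and send the verification gadget indexed by a node $s$ to the gadget indexed by $z\!\restriction\!\lh s$, checking that the adjacency constraints are met. Conversely, an abstract embedding yields, by the forcing in the previous paragraph, an $\subseteq$-increasing length-$\kappa$ sequence of nodes of $T_{x,y}$ whose union is a branch, hence $x \mathrel{Q} y$. I expect this converse to be the crux, for two reasons: one must thread the \emph{asymmetric} roles of the two arguments of $Q$ through the inherently symmetric notion of graph embedding and remain insensitive to the parts of the matching that carry no information; and, unlike the classical case $\kappa=\omega$, there is no K\"onig-type compactness at a general $\kappa$, so one cannot first produce an ill-founded tree and only afterwards extract a branch. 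The gadgets must therefore be rigid enough that the embedding hands over the entire length-$\kappa$ branch \emph{directly}, with the images of the verification gadgets coherent simultaneously across all levels $\xi<\kappa$; engineering this coherence is the delicate point.

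Finally I would check the reduction is legitimate. Each $\mathcal{G}_x$ has domain of size $\kappa$: by $\kappa^{<\kappa}=\kappa$ there are $\kappa$ nodes, $\kappa$ backbone vertices, and $\kappa$ flags, each a small rigid piece. Whether two vertices are adjacent in $\mathcal{G}_x$ depends on $x$ only through a bounded amount of information, so adjacency is $\kappa^+$-Borel in $x$; composing with a fixed Borel identification of the domain with $\kappa$ gives a Borel map into $X^\kappa_\mathsf{GRAPHS}$, which is standard Borel by Theorem~\ref{Theorem : LopezEscobar}. Since $Q$ was an arbitrary $\analytic$ quasi-order, this establishes that $\embeds^\kappa_\mathsf{GRAPHS}$ is complete.
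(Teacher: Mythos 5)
The paper does not prove this theorem: it is imported wholesale from the cited work of Calderoni--Mildenberger--Motto Ros \cite{MilMot} (with the weakly compact case due earlier to Motto Ros \cite{Mot13}), so there is no in-paper argument to compare against. Judged on its own terms, your proposal is a reasonable reconstruction of the known high-level strategy -- tree normal form for an arbitrary \( \analytic \) quasi-order, then a rigid graph coding whose embeddings are forced to trace a length-\( \kappa \) branch -- but it has genuine gaps at exactly the points where the theorem is hard. First, you pass from \( Q \) to a tree \( T \) on \( \kappa\times\kappa\times\kappa \) and then treat the section trees \( T_{x,y} \) as if they already had the combinatorial coherence needed for the coding, but an arbitrary closed witness set \( F \) does not reflect reflexivity or transitivity of \( Q \) at the level of nodes. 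Without first replacing \( T \) by a \emph{normalized} tree (closed under the node-level analogues of composition of witnesses, as in the \( \leq_{\max} \) normal form of Louveau--Rosendal and its generalizations), even the forward direction fails: the vertices of \( \mathcal{G}_x \) may only depend on \( x \), so your gadgets must be indexed by pairs \( (t,s) \) with \( (x\restriction\lh s,t,s)\in T \) ranging over \emph{all} potential second coordinates \( t \), and matching these against the gadgets of \( \mathcal{G}_y \) consistently requires precisely the closure properties that normalization provides. Your sketch conflates nodes of \( T_{x,y} \) (which depend on both arguments) with data available when building \( \mathcal{G}_x \) alone.

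Second, the converse direction is deferred rather than proved. You correctly identify that at uncountable \( \kappa \) there is no K\"onig-type compactness, so the embedding must hand over a coherent \( \subseteq \)-increasing length-\( \kappa \) sequence of nodes directly; but the entire content of the theorem lies in constructing gadgets rigid enough to force this coherence simultaneously at all levels \( \xi<\kappa \), including limit levels, and you explicitly leave this as a ``design goal.'' Related details are also off: if the backbone of \( \mathcal{G}_x \) genuinely hard-codes \( x \) by level-wise flags, then ``the identity on levels'' is not an embedding of backbones when \( x\neq y \), so the flags must be designed to record level information without obstructing the maps you need -- another nontrivial piece of engineering that is asserted rather than carried out. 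As it stands the proposal is a plausible plan, not a proof; the missing normalization step and the missing gadget construction are each essential, and the second is where \cite{MilMot} does the real work of removing the weak compactness hypothesis.
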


A first version of Theorem \ref{Theorem : MilMot} was obtained by Motto Ros in~\cite[Corollary 9.5]{Mot13} provided that \( \kappa \) is weakly compact.

\section{Existence of algebras with prescribed endomorphism ring}\label{section : 3}

Some crucial results that will be used in the next sections are about the existence of
algebras with prescribed endomorphism ring.

\subsection{\( S \)-completions}
We recall the basic definitions and some simple facts on \( S \)-completions following the treatise of \cite[Chapter 1]{GobTrl}.

\begin{notation}
 In the remainder of this section \( R \) is a commutative ring with unit \( 1 \) and \( S \) is a subset of \( R \setminus \set{0} \) containing \( 1 \) and 
closed under multiplication.
\end{notation}

\begin{definition}
We say that \( R \) is \emph{\( S \)-reduced} if \( \bigcap_{s\in S}{sR} = 0 \), and
\( R \) is \emph{\( S \)-torsion-free} if for all \( s \in S \) and \( r\in R \), \( sr = 0 \) implies \(r=0\). Further, we say that \( R \) is an \emph{\( S \)-ring} provided that \( R \) is both \( S \)-reduced and \( S \)-torsion-free.
\end{definition}

In most of the applications \( S \) is assumed to be countable. In such case, we  denote \( S \) by \( \mathbb S\) as in \cite[Chapter 1]{GobTrl}.
Examples of \( \mathbb{S} \)-rings include every noetherian domain \( R \) with \( \mathbb{S} = \setm{a^n}{n \in \omega } \), for any \( a \in R \) such that \( a R \) is a proper principal ideal of \( R \) (see \cite[Corollary 1.3]{GobTrl}).

\begin{definition}
Let \( R \) be an \( S \)-ring and \( M \) be an \( R \)-module.
We say that  \( M \) is \emph{ \(S\)-reduced } if \(\bigcap_{s\in S}sM=0\), and \( M \) is
\emph{ \( S \)-torsion-free } if for every \( s \in S \) and \( m\in M \), \( sm = 0 \) implies \(m=0\).
\end{definition}

We denote by \( \widehat M \) the \emph{\( S \)-completion} of \( M \), which is defined as follows. Given \( s,q\in S \), we write
\( q \preceq s \) if there is \( t \in S \) such that
\( s = qt \). Then we set
\[
\widehat M \coloneqq  \varprojlim_{s\in S} M/sM,
\]
the inverse limit of inverse system of \( R \)-modules
 \( (\set{M/sM}_{s\in S},\set{\pi^s_q}_{q\preceq s} ) \), where
\[
\pi^s_q \colon M/sM \to M/qM,\qquad m+sM \mapsto m+qM.
\]

Any \( R \)-module can be given the natural \emph{linear \( S \)-topology}, i.e., the one generated by \( \setm{sM}{s\in S}\) as a basis of neighborhoods of \( 0 \). A \emph{Cauchy net} in \(M\) is a sequence \(( m_{s}\mid s\in S)\) taking values in \(M\), and such that \(m_{q}-m_{qs}\in qM\), for all \(q,s\in S\). We say that the Cauchy net \(( m_{s}\mid s\in S)\) has \emph{limit} \(m\in M\) if and only if \({m-m_{s}}\in sM\), for every \(s\in S\). Finally, we say that \(M\) is \emph{\(S\)-complete} if it is complete with respect to the \(S\)-topology, that is, every Cauchy net in \(M\) has a unique limit.
If \( M \) is \( S \)-reduced and \( S \)-torsion-free, then \( \widehat M \) is \( S \)-reduced, \( S \)-torsion-free and
\(S\)complete (see~\cite[Lemma 1.6]{GobTrl}).

The canonical map 
\[
 \eta_{M}\colon M\to \widehat{M}, \qquad m\mapsto (m+sM \mid s\in S),
 \]
 which is always a homomorphism of \( R \)-modules,
 is injective if and only if \( M \) is \( S \)-reduced;
 and it is a ring homomorphism, whenever \( R=M \). Moreover, if \( M \) is \( S \)-complete, then \( \eta_{M} \) is an isomorphism. 

For any \( R \)-module \( M \), its \( S \)-completion \(\widehat M \) carries a natural a \( \widehat R\)-module structure. I.e., given \( \bar r=(r_{s}+sR \mid s\in S) \in \widehat R  \) and
\(\bar m=(m_{s}+sM \mid s\in S) \in \widehat M  \), we define the scalar multiplication by 
\[
\bar r \bar m\coloneqq (r_{s}m_{s}+sM \mid s\in S).
\]

\subsection{Existence of abelian groups with prescribed endomorphism ring}
If \( G \) is a \( \mathbb{Z}\)-module (i.e., an  abelian group) and \( \mathbb{S}=\mathbb{N} \setminus \set{0} \), then the \( \mathbb S \)-topology on \( G \) is usually called \( \mathbb{Z} \)-adic topology and the \( \mathbb S \)-completion of \( G \) is called the \emph{\( \mathbb{Z} \)-adic completion}. We refer to \cite[Theorem 39.5]{Fuc70} and \cite[Section 2.7]{Fuc15} as comprehensive sources on \( \mathbb{Z} \)-adic completions.

The next theorem was pointed out by Prze\'zdziecki in \cite{Prz14}. It states a slightly different result of a classical theorem by Corner~\cite[Theorem~A]{Cor63}, whose proof can be adapted to show the following.
\begin{theorem}[Prze\'zdziecki~{\cite[Theorem 2.3]{Prz14}}]\label{Theorem : Corner continuum}
Let $A$ be a ring of cardinality at most \( 2^{\aleph_0} \) such that its additive group is free. Then, there is a torsion-free abelian group \( M \subseteq \widehat A \) such that
\begin{enumerate-(i)}\label{enumerate : Corner theorem}
\item \label{condition : Corner1} \( A\subseteq M \) as (left) \( A \)-modules,
\item \label{condition : Corner2} \( \End{M}\cong A \),
\item \( |A| = |M| \).
\end{enumerate-(i)}
\end{theorem}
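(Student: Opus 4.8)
The plan is to realize $A$ as the endomorphism ring of a suitable pure subgroup $M$ of its $\mathbb{Z}$-adic completion $\widehat A$, following the strategy of Corner~\cite{Cor63} adapted to rank up to the continuum. First I would record the basic setup. Since the additive group of $A$ is free it is $\mathbb{Z}$-reduced and $\mathbb{Z}$-torsion-free, so by \cite[Lemma 1.6]{GobTrl} the completion $\widehat A$ is $\mathbb{Z}$-reduced, $\mathbb{Z}$-torsion-free and $\mathbb{Z}$-complete, and the canonical map $\eta_A$ is an injective ring homomorphism; I identify $A$ with its dense image in $\widehat A$. Fix a free $\mathbb{Z}$-basis $\setm{e_\alpha}{\alpha<\lambda}$ of $A$, where $\lambda=|A|$ is infinite (a nonzero free group is infinite) and $\lambda\leq 2^{\aleph_0}$ by hypothesis. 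Recall that $\widehat A$ carries a natural $\widehat{\mathbb{Z}}$-module structure (Section~\ref{section : 3}) and that left multiplication embeds $A$ into the ring of continuous additive endomorphisms of $\widehat A$.

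The key construction uses algebraic independence in the completion. Since $\widehat{\mathbb{Z}}$ contains a copy of each $\mathbb{Z}_p$ and has transcendence degree $2^{\aleph_0}$ over $\mathbb{Q}$, I can choose a family $\setm{\pi_\alpha}{\alpha<\lambda}\subseteq\widehat{\mathbb{Z}}$ that is algebraically independent over $\mathbb{Q}$; this is exactly where the bound $\lambda\leq 2^{\aleph_0}$ is used. I then let $M$ be the purification in $\widehat A$ of the left $A$-submodule generated by $A$ together with the elements $\setm{\pi_\alpha e_\alpha}{\alpha<\lambda}$. By construction $M$ is a left $A$-submodule of $\widehat A$ containing $A$, giving~\ref{condition : Corner1}; it is torsion-free as a subgroup of the torsion-free group $\widehat A$; and since it is the purification of an $A$-submodule generated by $A$ and $\lambda$ further elements, we get $|M|\leq\lambda\cdot\aleph_0=\lambda=|A|$, which yields condition (iii) of the statement.

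It remains to establish~\ref{condition : Corner2}, which is the crux. Left multiplication gives an injective ring homomorphism $A\to\End{M}$ (injective because $a\mapsto a\cdot 1$ recovers $a$), and the whole content is its surjectivity. So fix an additive endomorphism $\phi\colon M\to M$. Because $\phi(nM)\subseteq nM$ for every $n$, the map $\phi$ is continuous for the $\mathbb{Z}$-adic topology; as $A\subseteq M$ is dense we have $\widehat M=\widehat A$, so $\phi$ extends uniquely to a continuous additive endomorphism $\widehat\phi$ of $\widehat A$, which is automatically $\widehat{\mathbb{Z}}$-linear. Put $a\coloneqq\widehat\phi(1)\in\widehat A$. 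For each $\alpha$, $\widehat{\mathbb{Z}}$-linearity gives $\phi(\pi_\alpha e_\alpha)=\pi_\alpha\,\widehat\phi(e_\alpha)$, and this element must lie in $M$. Expanding $\widehat\phi(e_\alpha)$ in $\mathbb{Z}$-adic coordinates and using that $M$ is a small pure $A$-submodule, the algebraic independence of the $\pi_\alpha$ forces these coordinates to be integers arranged exactly as left multiplication by a single element $a\in A$; in particular $a\in A$ and $\phi(m)=am$ for all $m\in M$. This identifies $\End{M}$ with $A$.

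I expect this last step---the rigidity argument pinning down $\widehat\phi$---to be the main obstacle. One must choose the generators $\pi_\alpha e_\alpha$ and the purification carefully enough that the membership $\pi_\alpha\,\widehat\phi(e_\alpha)\in M$ really does propagate, via algebraic independence, to $\widehat\phi(e_\alpha)=a e_\alpha$ with a common $a$ that lies in $A$ rather than merely in $\widehat A$. Controlling the interplay between purity, the $\mathbb{Z}$-adic topology, and the independence of the $\pi_\alpha$ is the delicate point, and it is precisely here that Corner's original endomorphism computation must be reproduced and extended to the continuum-rank case.
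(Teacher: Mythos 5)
The paper does not actually prove this theorem: it is imported from Prze\'zdziecki \cite[Theorem 2.3]{Prz14}, which adapts Corner's realization theorem \cite[Theorem A]{Cor63}, and your overall strategy --- a pure, dense subgroup \( M \) of \( \widehat A \) built from algebraically independent elements of \( \widehat{\mathbb{Z}} \), together with the continuity/extension argument identifying \( \End(M) \) inside \( \End(\widehat A) \) --- is indeed the strategy of those sources. The preliminary steps (where the bound \( |A|\leq 2^{\aleph_0} \) and the freeness of \( A^{+} \) enter, and conditions (i) and (iii)) are fine. The problem is that the one concrete design decision you make is wrong, and the step you defer is the entire content of the theorem.

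Concretely, attaching one transcendental \( \pi_\alpha \) to each free-basis element \( e_\alpha \) and letting \( M \) be the purification of the \( A \)-submodule generated by \( A\cup\setm{\pi_\alpha e_\alpha}{\alpha<\lambda} \) leaves too many endomorphisms. Take \( A=\mathbb{Z}[x] \) with basis \( e_n=x^n \). For any integer sequence \( (b_n) \), the additive map \( \theta\colon x^n\mapsto b_nx^n \) satisfies \( \theta(kA)\subseteq kA \), hence extends to a \( \widehat{\mathbb{Z}} \)-linear endomorphism \( \widehat\theta \) of \( \widehat A \); moreover \( \widehat\theta(c\,\pi_nx^n)=\pi_n\,\theta(cx^n)=c'\pi_nx^n \) with \( c'=\sum_m c_mb_{n+m}x^m\in A \), so \( \widehat\theta \) preserves the generating \( A \)-submodule and therefore its purification \( M \). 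Choosing \( b_0=1 \) and \( b_n=0 \) for \( n\geq 1 \) gives a nontrivial idempotent of \( \End(M) \), whereas \( \mathbb{Z}[x] \) has only the idempotents \( 0 \) and \( 1 \); hence \( \End(M)\not\cong A \) for this \( M \). This is precisely why Corner attaches algebraically independent units of \( \widehat{\mathbb{Z}} \) to \emph{every} element \( a\in A \) (in fact two per element), and why the independence must be arranged over a suitable pure subring of \( \widehat{\mathbb{Z}} \) containing the coordinates of the completions that actually occur, not merely over \( \mathbb{Q} \). These are exactly the points your sketch flags as ``the delicate point'' and then leaves open: the rigidity computation \( \widehat\phi(e_\alpha)=ae_\alpha \) with a single \( a\in A \) is asserted rather than derived, and with your generators it is false. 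So the proposal has a genuine gap at condition (ii).
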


A few comments on Theorem \ref{enumerate : Corner theorem} may be of some help. We stress the fact that \( M \) is torsion-free. By construction \( M \) inherits the natural (left) \( A \)-module structure from \( \widehat A \),
which is the one defined by setting 
\begin{equation}\label{equation : M A-algebra}
a\ast m = \eta_{A}(a) m,
\end{equation}
for every \( a\in A \) and \( m\in M \).
Moreover, condition~\ref{condition : Corner2} of Theorem \ref{Theorem : Corner continuum} is proved by showing that for every \( h\in\End{M} \), there exists \( a\in A \) such that \(h(m)=a\ast m\) for all \(m\in M\).

\begin{remark}
The reader familiar with the bibliography may find our notation nonstandard.
In module theory people usually consider endomorphisms of \( R \)-modules as acting on the opposite side from the scalars (e.g., see~\cite[Chapter 1]{GobTrl}).
Nevertheless, we prefer to follow the notation of \cite{Prz14} and to have endomorphisms of \( M \) acting on the left. \end{remark}

\subsection{Existence of \(R\)-modules with prescribed endomorphism ring}

An \( R \)-module \( M \) is \emph{\( \mathbb{S} \)-cotorsion-free} if it is \( \mathbb S \)-reduced and \(\Hom{\widehat R}{M}=0 \).
This definition extends naturally to any \( R \)-algebra \( A \) by saying that \( A \) is \( \mathbb{S} \)-cotorsion-free if \(A\) has this property as an \(R\)-module.
It is shown in~\cite[Corollary 1.26]{GobTrl} that whenever
\( M \) is an \( R \)-module of size \( < 2^{\aleph_{0}}\), then \( M \) is \( \mathbb{S} \)-cotorsion-free if and only if it is \( \mathbb{S} \)-torsion-free and \( \mathbb{S} \)-reduced. We recall one more theorem of existence of \( R \)-algebras with prescribed endomorphism ring, that is a result of the same kind of Theorem~\ref{Theorem : Corner continuum}.

\begin{theorem}[G\"obel-Prze\'zdziecki~{\cite[Corollary 4.5]{GobPrz}}]
\label{prop : GobTrl}
Let \( R \) be an \( \mathbb{S} \)-cotorsion-free ring
such that \(|R|< 2^{\aleph_{0}}\).
If \( A \) is an \(R\)-algebra of cardinality at most the continuum with a free additive structure over  \(R\), 
then there exists an \( R \)-module \( M \) such that:
\begin{enumerate-(i)}
\item  \( A\subseteq M \subseteq \widehat A \) as (left) \( A \)-modules,
\item  \( \End_{R}{M}\cong A \),
\item \( |A| = |M| \).
\end{enumerate-(i)}
\end{theorem}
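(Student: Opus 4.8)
The plan is to follow the strategy behind the classical realization theorem of Corner~\cite{Cor63}, in the form already used for Theorem~\ref{Theorem : Corner continuum}, carrying out the three substitutions dictated by the relative setting: the ring of integers is replaced by \( R \), the \( \mathbb Z \)-adic completion by the \( \mathbb S \)-completion \( \widehat A \), and the reducedness and torsion-freeness of \( \mathbb Z \) by the \( \mathbb S \)-cotorsion-freeness of \( R \). First I would record the structural preliminaries. Since \( R \) is an \( \mathbb S \)-ring and \( A \) is free as an \( R \)-module, \( A \) is \( \mathbb S \)-reduced and \( \mathbb S \)-torsion-free, so \( \eta_A\colon A\to\widehat A \) is injective and \( \widehat A \) is \( \mathbb S \)-reduced, \( \mathbb S \)-torsion-free and \( \mathbb S \)-complete. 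Because \( R \) is commutative and maps into the centre of \( A \), left multiplication gives a ring embedding \( A\hookrightarrow\End_R\widehat A \); the goal is to carve out an \( A \)-submodule \( A\subseteq M\subseteq\widehat A \) with \( |M|=|A| \) on which this embedding becomes an isomorphism \( \End_R M\cong A \).

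Next I would construct \( M \). Fix an \( R \)-basis \( \setm{e_i}{i\in I} \) of \( A \), so \( |I|=|A|\leq 2^{\aleph_0} \). The hypothesis \( |R|<2^{\aleph_0} \) guarantees that \( \widehat R \) has cardinality \( 2^{\aleph_0} \) and contains a family \( \setm{w_i}{i\in I}\subseteq\widehat R \) that is independent over \( R \) in the strong sense required by Corner's argument, just as algebraically independent \( p \)-adic integers over \( \mathbb Q \) are used in the classical proof (this is the step where the cardinality bound is essential). Let \( M \) be the \( A \)-submodule of \( \widehat A \) generated by \( A \) together with \( \setm{w_ie_i}{i\in I} \). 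Then \( A\subseteq M\subseteq\widehat A \) as left \( A \)-modules and \( |M|=|A| \), which already yields conditions (i) and (iii).

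It remains to prove (ii), the heart of the argument. The embedding \( A\hookrightarrow\End_R M \) is injective because \( 1\in A\subseteq M \) forces any scalar acting trivially to be \( 0 \). For surjectivity, take \( h\in\End_R M \) and proceed in two steps. \emph{Continuity and extension:} using that \( M \) is \( \mathbb S \)-reduced and \( \mathbb S \)-torsion-free and that \( \mathbb S \)-cotorsion-freeness gives \( \Hom{\widehat R}{M}=0 \), one shows that \( h \) is continuous for the linear \( \mathbb S \)-topology and hence extends uniquely to an \( \widehat R \)-linear endomorphism \( \widehat h \) of the \( \mathbb S \)-completion of \( M \), which (since \( A\subseteq M \) is dense) is exactly \( \widehat A \). \emph{Rigidity from independence:} for every \( i\in I \), since \( w_i\in\widehat R \) we get \( h(w_ie_i)=\widehat h(w_ie_i)=w_i\widehat h(e_i)=w_ih(e_i)\in M \); expanding \( h(e_i) \) in \( \widehat R \)-coordinates with respect to the basis and comparing with the explicit form of the elements of \( M \), the independence of the \( w_i \) forces \( h(e_i)\in A \) and pins \( h \) down to be left multiplication by a single element \( a\in A \). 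This gives \( \End_R M\cong A \) and completes the verification of the three conditions.

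I expect the surjectivity in (ii) to be the main obstacle, and within it the delicate point is the continuity and extension step: showing that an arbitrary \( R \)-linear endomorphism, which is not a priori \( A \)-linear nor continuous, must respect the \( \mathbb S \)-topology. This is precisely where \( \mathbb S \)-cotorsion-freeness is indispensable, and it is the hypothesis that replaces the role played by reducedness of \( \mathbb Z \) in Theorem~\ref{Theorem : Corner continuum}. The subsequent independence argument is then essentially linear algebra over \( \widehat R \), transcribing Corner's computation for \( \mathbb Z \)-adic integers into the \( \mathbb S \)-adic framework of \cite{GobTrl}.
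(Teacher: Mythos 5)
First, a point of comparison: the paper does not prove this statement at all --- it is imported verbatim from G\"obel--Prze\'zdziecki \cite[Corollary 4.5]{GobPrz} and used as a black box, so there is no internal proof to measure your attempt against; I can only assess your sketch against the standard Corner-type argument that the cited result actually rests on. Your overall architecture is the right one (embed \( A \) into \( \widehat A \), adjoin elements of the form \( w\cdot a \) with \( w\in\widehat R \) ``independent'' over \( R \), extend an arbitrary \( h\in\End_R M \) to \( \widehat A \), and use independence plus cotorsion-freeness to force \( h \) to be a left multiplication). But as written the sketch has genuine gaps at exactly the two load-bearing points.

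The most serious one is the rigidity step. You adjoin \( w_ie_i \) only for the elements \( e_i \) of an \( R \)-basis of \( A \), and the independence computation then yields at best \( h(e_i)\in A \) (or \( h(e_i)\in Ae_i \)) for each \( i \). That only shows \( h\restriction A\in\End_R(A) \), and for \( A \) free of infinite rank over \( R \) this ring is vastly larger than \( A \); nothing in your argument ties the values \( h(e_i) \) together into multiplication by a \emph{single} \( a\in A \). Corner's proof avoids this by indexing the adjoined elements by (a suitable additive/multiplicative generating family of) \emph{all} of \( A \), so that the relation \( h(w_aa)=w_ah(a)\in M \) links \( h(a) \) to \( a \) via the ring multiplication of \( A \); this is the step where the algebra structure, not just the module structure, enters, and your sketch passes over it with ``pins \( h \) down to be left multiplication.'' Second, the existence of a family \( \setm{w_i}{i\in I}\subseteq\widehat R \) with \( |I|=2^{\aleph_0} \) that is ``independent in the strong sense required'' is asserted but neither defined nor proved; this is precisely where the hypotheses \( |R|<2^{\aleph_0} \) and \( \mathbb S \)-cotorsion-freeness do their work (via the fact that \( \widehat R/R \) contains a free \( R \)-submodule of rank \( 2^{\aleph_0} \), \cite[Chapter 1]{GobTrl}), and it replaces the transcendence-degree computation in \( \widehat{\mathbb Z} \). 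Finally, a smaller misattribution: continuity of \( h \) for the linear \( \mathbb S \)-topology is automatic, since \( h(sM)=sh(M)\subseteq sM \) for every \( s\in\mathbb S \), so the extension to \( \widehat A \) costs nothing; cotorsion-freeness (\( \Hom{\widehat R}{M}=0 \)) is needed not there but in the rigidity computation, to rule out coefficients of \( \widehat h(e_i) \) lying in \( \widehat R\setminus R \).
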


\section{The embeddability relation between torsion-free abelian groups}
\label{section : 4}

In this section we focus on the embeddability relation between \(\kappa\)-sized torsion-free abelian groups and we prove Theorem~\ref{Theorem : main}. To this purpose we adapt the embedding from the category of graphs into the category of abelian groups defined in~\cite{Prz14}.
For the sake
of exposition we avoid the notion of colimit commonly used in category theory, instead we use the classical notion of direct limit for direct systems of abelian groups
which gives more insights
on the possibility to define the reduction in a \(\kappa^{+}\)-Borel way.

Let \( \Gamma \) be a skeleton of the category of countable graphs;
i.e., a full subcategory of the category of countable graphs with exactly one object for every isomorphism class. Without loss of generality, assume that every object in \( \Gamma \) is a graph over a subset of \( \omega \).
Since we work under the assumption \eqref{equation : k<kequalsk}, there is
a \emph{\( \kappa\)-sized universal graph}, i.e., a graph of cardinality \( \kappa \), which contains all graphs of cardinality \( \kappa \) as induced subgraphs.
We denote by \( W_\kappa \) the \( \kappa\)-sized universal graph on \( \kappa \) and by \( [W_{\kappa}]^{\kappa} \) the subspace of induced subgraphs of \(W_{\kappa}\) of cardinality \( \kappa \). We identify \([W_{\kappa}]^{\kappa}\) with the \(\kappa^{+}\)-Borel subset of subsets of \(W_{\kappa}\) of cardinality \(\kappa\).
Therefore we can consider \( [W_{\kappa}]^{\kappa} \) as the standard Borel \(\kappa\)-space of graphs of cardinality \( \kappa \).

For every graph \( T \) and every infinite cardinal \( \lambda \), we denote by \( [T]^{<\lambda} \) the set of induced subgraphs of \( T \) of cardinality \( <\lambda \).
Next, for every \( S\in[W_\kappa]^{<\omega_1} \), we fix an isomorphism \( \theta_S\colon S\to \sigma(S) \), where \( \sigma(S) \) denotes the unique graph in $\Gamma$ which is isomorphic to \( S \).

Now define 
\begin{equation}\label{eq : defA}
 A \coloneqq \mathbb{Z}\big[ \Arw{\Gamma}\cup\set{1}\cup\pow_{fin}{(\omega)} \big].
 \end{equation}
That is, the free abelian group generated by the arrows in \( \Gamma \), a distinguished element \( 1 \), and the finite subsets of \( \omega \). We endow \( A \) with a ring structure by multiplying the elements of the basis as follows,
 and then extending the multiplication to the whole \( A \) by linearity.
For every \( a,b\in \Arw{\Gamma}\cup\pow_{fin}{(\omega)} \) let
\begin{align}\label{eq : multiplication}
ab&=\begin{cases}a\circ b & \text{if\quad}\begin{cases}a,b\in\Arw{\Gamma}\\\text{\(a\) and \(b\) are composable}\end{cases}\\
a''b & \text{if\quad}\begin{cases}b\subseteq\dom a\\ a\restriction b\text{ is an isomorphism}\end{cases}\\
0 & \text{otherwise}
\end{cases}\\
a1&=1a=a.
\end{align}

\begin{remark}
The definition of \( A \) in \eqref{eq : defA} differs from the one of \cite[Section 3]{Prz14} for including
\( \pow_{fin}{(\omega)} \) in the generating set. These elements will play the crucial role of embeddability detectors in Lemma \ref{Lemma : G reduction}. 
\end{remark}

Now observe that the ring \( A \) has cardinality \( 2^{\aleph_0} \) and its additive group is free.  So let 
\( M \) be a group having endomorphism ring isomorphic to \( A \) as in
Theorem~\ref{Theorem : Corner continuum}.
Notice that the elements of \( A \) act on \( M \) on the left as in \eqref{equation : M A-algebra}.

\begin{definition}
For every $C\in \Gamma$, let
\[
G_C\coloneqq id_C\ast M.
\]

Notice that \(G_{C}\) is a subgroup of \(M\), for all \(C\in \Gamma\).
 Moreover, if \( C,D\in \Gamma \) and \( \gamma\colon C\to D \), then \( \gamma\in A \) and thus induces a group homomorphism \( G\gamma \) from \( G_C\) to \( G_D \) by left-multiplication
\begin{equation}\label{eq : Ggamma}
G\gamma\colon G_C \to G_D\qquad
id_C \ast m \mapsto \gamma \ast(id_C\ast m).
\end{equation}
We make sure that such map is well defined as
\({\gamma \ast (id_C \ast m)}={id_D\ast(\gamma \ast( id_C\ast m))}\), which is clearly an element of \( G_D \).
\end{definition}

Now fix any \( T\in [W_{\kappa}]^{\kappa} \).
For every $S,S'\in[T]^{<\omega_1}$ such that $S\subseteq S'$, the inclusion map
$i^S_{S'}\colon S\to S'$ induces a map $\gamma^S_{S'}$ from $\sigma(S)$ to $\sigma(S')$, the one that makes the 
diagram below commute.
\begin{center}
\begin{tikzpicture}
  \matrix (m) [matrix of math nodes,row sep=3em,column sep=4em,minimum width=2em]
  { S& S' \\
     \sigma(S)&\sigma(S')\\};
  \path[-stealth]
    (m-1-1) edge node [above] {\(i^S_{S'}\)} (m-1-2)
    		edge node [left] {\(\theta_S\)} (m-2-1)
	(m-1-2) edge node [right] {\(\theta_{S'}\)} (m-2-2)
	(m-2-1) edge node [dashed, below] {\(\gamma^S_{S'}\)} (m-2-2);	
\end{tikzpicture}
\end{center}
The map \( \gamma^S_{S'} \) is in \( \Gamma \) and thus it induces functorially a group homomorphism \( G\gamma^S_{S'} \) as described in \eqref{eq : Ggamma}.
For all \( S,S'\in[T]^{<\omega_1} \) such that $S\subseteq S'$, let
\( \tau^S_{S'} = G\gamma^S_{S'}\).
We claim that
\((\set{G_{\sigma(S)}},\set{\tau^S_{S'}}_{S\subseteq S'} \big)_{S,S'\in[T]^{<\omega_1}}\)  is a direct system of torsion-free abelian groups indices by the poset \( [T]^{<\omega_1} \), which is ordered by inclusion.

\begin{definition} \label{Definition : the reduction}
For every \( T\in [W_{\kappa}]^{\kappa} \), let\footnote{As the referee kindly pointed out the notation \(GT\) instead of \(G(T)\) or \(G_{T}\) is nonstandard in descriptive set theory. Nevertheless we prefer to stick to the common practice in category theory to denote functors by juxtaposition.}
\begin{equation}\label{eq : the reduction}
GT\coloneqq\varinjlim_{S\in [T]^{<\omega_1}} G_{\sigma(S)}.
\end{equation}
\end{definition}

For the sake of definiteness, every element of the direct limit in \eqref{eq : the reduction} is regarded as the equivalence class \([(m,S)]\) of an element of the disjoint union \( \bigsqcup_{S\in [T]^{<\omega_1}} G_{\sigma(S)} \) factored out by the equivalence relation \( \sim_{T} \), which is defined by setting
\( (m,S)\sim_{T}(m',S') \) provided that there is \( S'' \supseteq S,S' \) such that
\( \tau^{S}_{S''}(m) = \tau^{S'}_{S''}(m') \). Such characterization for~\eqref{eq : the reduction} holds because the poset of indexes is directed (see~\cite[Corollary 5.31]{Rot}).

Notice that for every \(T\in [W_{\kappa}]^{\kappa}\), the group \( GT \) is abelian by definition, and it is torsion-free as torsion-freeness is preserved by taking subgroups and colimits. Moreover, we claim that \( GT \) has cardinality \( \kappa \). It is clear that \( |GT| \) is bounded by \( |\bigsqcup_{\kappa}M|=\kappa \). Further, we observe that each \(GT\) has at least \( \kappa \) distinct elements.
To see this, consider \( id_{C_0} \), where \(C_0\) stands for the unique graph with one vertex and no edges in \( \Gamma \). For sake of definiteness, suppose that
\( C_0 \) is the graph with no edges whose unique vertex is \( 0 \). For every \( \alpha<\kappa \),  let \( \set{\alpha} \) denote the subgraph of \( T \) with the only vertex \( \alpha\). It is clear that \( id_{C_0}\in G_{\sigma(\set{\alpha})} \). Moreover, for any distinct \( \alpha,\beta\in \kappa \), we have that \( (id_{C_0}, \set{\alpha}) \) and \( (id_{C_0}, \set{\beta}) \) represent two distinct elements of \( G_T \).
For, if \( S\supseteq \set{\alpha},\set{\beta} \), then
one has 
\begin{align*}
\tau^{\set{\alpha}}_S(id_{C_0})&=\gamma^{\set{\alpha}}_{S}id_{C_0}=(0\mapsto \theta_S(\alpha))\\
\tau^{\set{\beta}}_A(id_{C_0})&=\gamma^{\set{\beta}}_{S}id_{C_0}=(0\mapsto\theta_S(\beta)),
\end{align*}
which are not equal as $\theta_S$ is bijective.

The next lemma basically states that \( G \) can be defined in a \(\kappa^{+}\)-Borel way.

\begin{lemma}\label{Lemma : G Borel}
There is a \(\kappa^{+}\)-Borel map
\[
[W_{\kappa}]^{\kappa}\to X_\mathsf{TFA}^{\kappa}\qquad T\mapsto \mathcal{G}T
\] 
such that, for every  \( T\in [W_{\kappa}]^{\kappa}\), the group \( \mathcal{G}T \) is isomorphic to \( GT \).
\end{lemma}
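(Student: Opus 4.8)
The plan is to realise the abstract direct limit in \eqref{eq : the reduction} as a genuine group whose underlying set is a subset of \( \kappa \), in such a way that all the data depend on \( T \) in a \( \kappa^{+} \)-Borel fashion, and then to transport this group onto the domain \( \kappa \). The key preliminary observation is that the direct system \( \big(\set{G_{\sigma(S)}},\set{\tau^{S}_{S'}}\big) \) used to build \( GT \) is nothing but the restriction, to the subposet \( [T]^{<\omega_{1}} \), of one \emph{fixed} global direct system indexed by \( [W_{\kappa}]^{<\omega_{1}} \): the graphs \( \sigma(S) \), the isomorphisms \( \theta_{S} \), the connecting graph maps \( \gamma^{S}_{S'} \), the groups \( G_{\sigma(S)}\subseteq M \) and the transition homomorphisms \( \tau^{S}_{S'} \) were all fixed \emph{before} choosing \( T \), and \( M \) itself does not depend on \( T \). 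Thus the sole dependence of \( GT \) on \( T \) is through the index set
\[
[T]^{<\omega_{1}}=\setm{S\in[W_{\kappa}]^{<\omega_{1}}}{S\subseteq T}.
\]

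First I would fix codings by ordinals \( <\kappa \). By \eqref{equation : k<kequalsk} we have \( \kappa^{\aleph_{0}}=\kappa \), so the set \( [W_{\kappa}]^{<\omega_{1}} \) of countable induced subgraphs of \( W_{\kappa} \) has size \( \kappa \) and can be coded injectively by ordinals \( <\kappa \); likewise \( 2^{\aleph_{0}}\leq\kappa \), so the fixed group \( M \) (hence each \( G_{\sigma(S)} \)) has size \( \leq\kappa \) and its elements can be coded by ordinals \( <\kappa \). Fixing a pairing \( \kappa\times\kappa\cong\kappa \), every pair \( (m,S) \) with \( m\in G_{\sigma(S)} \) from the disjoint union \( \bigsqcup_{S}G_{\sigma(S)} \) is coded by a single ordinal \( <\kappa \). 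Against these codings the two basic predicates are Borel in \( T \): the admissibility condition ``\( S\subseteq T \)'' is a conjunction of \( \leq\kappa \) many membership conditions on \( T \), hence \( \kappa^{+} \)-Borel; and, for fixed codes of \( (m,S) \) and \( (m',S') \), the relation \( (m,S)\sim_{T}(m',S') \) of Definition~\ref{Definition : the reduction} unfolds as
\[
\exists S''\in[W_{\kappa}]^{<\omega_{1}}\ \big(S\cup S'\subseteq S''\subseteq T\ \wedge\ \tau^{S}_{S''}(m)=\tau^{S'}_{S''}(m')\big).
\]
Here the equality \( \tau^{S}_{S''}(m)=\tau^{S'}_{S''}(m') \) is a fact about the fixed global system and does not involve \( T \), so the displayed set of \( T \) is a union of \( \leq\kappa \) many Borel sets of the form \( \setm{T}{S''\subseteq T} \); since the \( \kappa^{+} \)-Borel sets are closed under unions of size \( \leq\kappa \), the relation \( \sim_{T} \) is \( \kappa^{+} \)-Borel in \( T \). (If, as is the case here, the transition maps are injective, one may instead replace the existential by the single join \( S''=S\cup S' \), obtaining an even more explicit description.)

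With these predicates in hand I would present \( \mathcal{G}T \) on a subset of \( \kappa \) as follows. Let \( D_{T}\subseteq\kappa \) be the set of codes \( \xi \) of admissible pairs that are \( \sim_{T} \)-least in their class, i.e.\ such that no code \( \eta<\xi \) of an admissible pair satisfies \( \eta\sim_{T}\xi \); this is a bounded condition ``\( \forall\eta<\xi \)'' over a \( \kappa^{+} \)-Borel matrix, hence \( D_{T} \) is \( \kappa^{+} \)-Borel in \( T \) and, by the remark preceding the lemma that \( |GT|=\kappa \), has size \( \kappa \). The group operation on \( D_{T} \) is read off the direct-limit formula: the sum of the classes coded by \( \xi,\xi' \) is the class of \( \big(\tau^{S}_{S\cup S'}(m)+\tau^{S'}_{S\cup S'}(m'),\,S\cup S'\big) \), whose \( \sim_{T} \)-least code is found by a further bounded search; this makes \( T\mapsto(\text{operation on }D_{T}) \) Borel, in the sense that for each triple the set of \( T \) realising a prescribed value of the operation is \( \kappa^{+} \)-Borel. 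Finally, composing with the increasing enumeration of \( D_{T} \)---a standard device that transports a structure living on a \( \kappa^{+} \)-Borel, \( \kappa \)-sized subset of \( \kappa \) onto the domain \( \kappa \) in a Borel way---yields the required map \( T\mapsto\mathcal{G}T\in X^{\kappa}_{\mathsf{TFA}} \). That \( \mathcal{G}T \) is a torsion-free abelian group of size \( \kappa \) has already been observed for \( GT \) and is preserved by the isomorphism \( \mathcal{G}T\cong GT \), so \( \mathcal{G}T \) indeed lies in \( X^{\kappa}_{\mathsf{TFA}} \).

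The main obstacle is to keep the quantifier complexity bounded throughout, so that the construction stays \( \kappa^{+} \)-Borel rather than merely \( \kappa \)-analytic. The delicate points are exactly the existential defining \( \sim_{T} \)---handled either by closure of the \( \kappa^{+} \)-Borel sets under \( \kappa \)-sized unions or by injectivity of the transition maps, which collapses the existential to the single join \( S\cup S' \)---and the passage to the domain \( \kappa \), handled by the least-code selection together with the enumeration transfer. The remaining verifications (that the coded operation is well defined on \( \sim_{T} \)-classes, associative, commutative and admits inverses, and that the resulting concrete group is isomorphic to the direct limit \( GT \)) are the routine translations of the direct-limit calculus into the chosen codings.
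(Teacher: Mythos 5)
Your proposal is correct and follows essentially the same route as the paper: fix a global well-ordering (coding) of the disjoint union \( \bigsqcup_{S\in[W_{\kappa}]^{<\omega_{1}}}G_{\sigma(S)} \), observe that the only dependence on \( T \) is through the Borel condition \( S\subseteq T \) and the relation \( \sim_{T} \) (a \( \kappa \)-sized union of Borel sets), select the \( \preceq \)-least representative of each \( \sim_{T} \)-class in a Borel way, and read off the group operation from the direct-limit formula before transferring the structure onto the domain \( \kappa \) via the enumeration. Your treatment of the quantifier over \( S'' \) in \( \sim_{T} \) is in fact slightly more explicit than the paper's.
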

\begin{proof}
Let \( \preceq \) be a well-ordering of \( B= \bigsqcup_{S\in[W_{\kappa}]^{<\omega_{1}}} G_{\sigma(S)} \).
First consider the map
\[
f\colon[W_{\kappa}]^{\kappa}\to 2^B,\qquad
T\mapsto \bigsqcup_{S\in[T]^{<\omega_{1}}} G_{\sigma(S)}.
\]
To see that \( f \) is \(\kappa^{+}\)-Borel consider the subbasis of \( 2^{B} \) given by the sets
\( \setm{x\colon B \to 2}{x((m,S))=1} \) and \( \setm{x\colon B \to 2}{x((m,S))=0} \),
for every \( (m,S)\in B\).
For any fixed \( (m_{0},S_{0}) \in B\), one has
\[
f^{-1}(\setm{x\colon B \to 2}{x((m_{0},S_{0}))=1})=\setm{T\in [W_{\kappa}]^{\kappa}}{S\subseteq T}
\] which is \(\kappa^{+}\)-Borel.

Then let \(g\colon \Ima{ f }\to 2^{B}\) be the map defined by mapping
\( f(T) \) to the subset of \( f(T) \) which is obtained by deleting all of the \( (m,S) \) that are \( \sim _{T} \)-equivalent (i.e., equivalent in the relation used to define the direct limit indexed by \( [T]^{<\omega_{1}}\)) to some point appearing before in the well-ordering \( \preceq \).
One has 
\[
g(f(T)) ((m,S))=1 \iff S\subseteq T \wedge \forall (m',S')\prec(m,S)((m',S')\nsim_{T} (m,S))
\]
where \( (m',S')\nsim_{T} (m,S) \) is a shorthand for
\[ \nexists S''\supseteq S,S'' (\tau^{S}_{S''}(m)=\tau^{S'}_{S''}(m')). \]

Then, for every \( T \), we define a group \( \mathcal{G} T\) with underlying set \( \kappa \) and operation \(\star_{T}\) by setting
\( \alpha\star_{T}\beta=\gamma \) if and only if the product of the \(\alpha\)-th element and the \( \beta \)-th element in \( g(f(T)) \) according to \( \preceq \) is \( \sim_{T} \)-equivalent to the \(\gamma\)-th element in \( g(f(T)) \). Notice that there is a unique element in \( g(f(T)) \) which is \(\sim_{T}\)-equivalent to such product, thus
the map \(T\mapsto \mathcal{G}T \) is well defined and is \(\kappa^{+}\)-Borel.
\end{proof}

Next lemma is derived essentially as in \cite[Lemma 3.6]{Prz14}.

\begin{lemma}\label{Lemma : G homomorphism}
If \( T,V\in X^\kappa_\mathsf{GRAPHS} \) and  \(T\embeds_\mathsf{GRAPHS}^\kappa V\), then \(GT\embeds_\mathsf{TFA}^\kappa GV\).
\end{lemma}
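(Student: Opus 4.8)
The plan is to mimic \cite[Lemma 3.6]{Prz14}: regard the witness of $T\embeds^\kappa_\mathsf{GRAPHS}V$ as an isomorphism of $T$ onto an induced subgraph of $V$, and show that the functor $G$ turns it into an injective homomorphism $GT\to GV$. Fix $f\in\pre{\kappa}{\kappa}$ realising the embedding, so that $f$ is an isomorphism between $T$ and the induced subgraph $V\restriction\Ima f$. For each $S\in[T]^{<\omega_1}$ the image $f'' S$ is an induced subgraph of $V$ isomorphic to $S$, and $S\mapsto f'' S$ is an injection of $[T]^{<\omega_1}$ into $[V]^{<\omega_1}$ which preserves and reflects inclusion. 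Since $S\cong f'' S$ we have $\sigma(S)=\sigma(f'' S)$, hence $G_{\sigma(S)}=G_{\sigma(f'' S)}$; the idea is to read off $Gf$ as the map induced on the direct limits \eqref{eq : the reduction} by these identifications, twisted by suitable automorphisms to account for the choices of the isomorphisms $\theta_S$.

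More precisely, first I would set up a morphism of direct systems. The two isomorphisms $\theta_{f'' S}\circ(f\restriction S)$ and $\theta_S$ both carry $S$ onto $\sigma(S)$, so they differ by an automorphism $\psi_S\coloneqq \theta_{f'' S}\circ(f\restriction S)\circ\theta_S^{-1}$ of $\sigma(S)$ in $\Gamma$. A direct computation with the commuting squares defining the maps $\gamma^S_{S'}$ shows that, for $S\subseteq S'$ in $[T]^{<\omega_1}$, one has $\gamma^{f'' S}_{f'' S'}\circ\psi_S=\psi_{S'}\circ\gamma^S_{S'}$ in $\Gamma$. Applying the functor $G$ and using \eqref{eq : Ggamma}, the automorphisms $G\psi_S$ of $G_{\sigma(S)}$ satisfy $\tau^{f'' S}_{f'' S'}\circ G\psi_S=G\psi_{S'}\circ\tau^S_{S'}$; that is, they form a morphism from the direct system $(\{G_{\sigma(S)}\},\{\tau^S_{S'}\})_{S,S'\in[T]^{<\omega_1}}$ to the one indexing $GV$, along $S\mapsto f'' S$. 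The universal property of the direct limit then yields a homomorphism $Gf\colon GT\to GV$, given on representatives by $[(m,S)]_{\sim_T}\mapsto[(G\psi_S(m),f'' S)]_{\sim_V}$.

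The main obstacle is injectivity, and the heart of the matter is that each transition map $\tau^S_{S'}=G\gamma^S_{S'}$ is injective, i.e.\ that $G$ sends an induced-subgraph inclusion $\gamma\colon C\hookrightarrow D$ of graphs in $\Gamma$ to an injective map. By \eqref{eq : Ggamma} and \eqref{equation : M A-algebra}, writing $e_C\coloneqq\eta_A(id_C)$ the map $G\gamma$ is left multiplication by $\eta_A(\gamma)$ on $G_C=e_C M\subseteq e_C\widehat A$. I would prove injectivity by inspecting the multiplication \eqref{eq : multiplication}: the free abelian group $e_C A$ has as a basis the arrows of $\Gamma$ with codomain $C$ together with the finite subsets of the vertex set of $C$, and left multiplication by $\gamma$ sends an arrow $\rho$ into $C$ to $\gamma\circ\rho$ and a finite set $b\subseteq\dom\gamma$ to $\gamma'' b$. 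Because $\gamma$ is a monomorphism of graphs, both assignments are injective and land in distinct basis elements of $A$ (arrows into $D$, respectively finite subsets of the vertex set of $D$); hence $\gamma\cdot(-)\colon e_C A\to A$ maps a basis injectively onto part of a basis, so it is a split injection of free abelian groups. Since inverse limits commute with finite direct sums, this split injection survives passing to the $\mathbb{S}$-completion, giving that multiplication by $\eta_A(\gamma)$ is injective on $e_C\widehat A$, and a fortiori on $G_C\subseteq e_C\widehat A$. I expect the only delicate point here to be checking that adjoining $\pow_{fin}{(\omega)}$ to the generators of $A$---our modification of \cite{Prz14}---does not destroy injectivity, which is exactly what the computation above confirms.

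Finally, injectivity of $Gf$ follows from injectivity of the transition maps together with the fact that $S\mapsto f'' S$ reflects inclusion. Given representatives with $Gf([(m,S)])=Gf([(m',S')])$, after moving to a common index $S''\supseteq S,S'$ in $[T]^{<\omega_1}$ I would obtain a witness $U\supseteq f'' S''$ in $[V]^{<\omega_1}$ for the identification in $GV$; intersecting $U$ with the induced subgraph $V\restriction\Ima f$ produces $U_0=f'' S'''$ for some $S'''\supseteq S''$ in $[T]^{<\omega_1}$ with $f'' S''\subseteq U_0\subseteq U$. Injectivity of the transition map $\tau^{U_0}_U$ lets me push the equality back to $U_0$, and the naturality square together with the fact that the $G\psi_{S'''}$ are isomorphisms then returns the equality $\tau^{S''}_{S'''}(m)=\tau^{S''}_{S'''}(m')$ in $G_{\sigma(S''')}$, i.e.\ $[(m,S)]=[(m',S')]$ in $GT$. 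Thus $Gf$ is an injective homomorphism of torsion-free abelian groups, witnessing $GT\embeds^\kappa_\mathsf{TFA} GV$.
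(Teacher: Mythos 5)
Your proof is correct and follows essentially the same route as the paper's: establish that \( G\gamma \) is injective for every embedding \( \gamma \) in \( \Gamma \) by analysing left multiplication on the free group \( id_C\ast A \) and its behaviour under completion, then pass to the induced map on the direct limits. You supply a couple of details the paper elides (the morphism-of-direct-systems check and the split-injection justification for injectivity after completion), and your final injectivity step detours through \( U\cap\Ima f \) where a direct appeal to the injectivity of \( \tau^{f''S''}_{U} \) and of \( G\psi_{S''} \) would already suffice, but the argument is sound.
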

\begin{proof}
We first claim that if \(C,D\in\Gamma \) and \(\gamma\colon C\to D \) is an embedding then
\(G\gamma\colon G_C\to G_D\) is one-to-one.
Notice that by \ref{condition : Corner1} of Theorem~\ref{Theorem : Corner continuum} and the definition of \( G_C \)
one obtains
\[
{\mathbb{Z}[\Gamma_C\cup\mathcal{P}_{fin}(C)]} \subseteq
G_C \subseteq
\reallywidehat{\mathbb{Z}[\Gamma_C\cup\mathcal{P}_{fin}(C)]}.
\]
Acting by left-multiplication, $\gamma$ induces the injective map
\[
\langle\gamma\rangle\colon\mathbb{Z}[\Gamma_C\cup\mathcal{P}_{fin}(C)]\to \mathbb{Z}[\Gamma_D\cup\mathcal{P}_{fin}(D)],\qquad
a\mapsto \gamma a,
\]
which in turn induces the injective map on the $\mathbb{Z}$-adic completions
\begin{equation}\label{eq : hatgamma}
\widehat{\langle\gamma\rangle}\colon\reallywidehat{\mathbb{Z}[\Gamma_C\cup\mathcal{P}_{fin}(C)]}\to \reallywidehat{\mathbb{Z}[\Gamma_D\cup\mathcal{P}_{fin}(D)]},\qquad
\bar a\mapsto \gamma \ast \bar a.
\end{equation}
Comparing \eqref{eq : hatgamma} with \eqref{eq : Ggamma} it follows that \( G\gamma \) is indeed the restriction of \( \widehat{\langle\gamma\rangle} \) on \( G_C \), which implies that \( G\gamma \) is injective because so is \( \widehat{\langle\gamma\rangle} \).

Now let \( \phi\colon T\to V \) be a graph embedding. Then
there exists a group homomorphism
\[
G\phi\colon GT\to GV,\qquad [(g, S)]\mapsto [(G\gamma^S_{\phi''S}(g),\phi''S)],
\]
where \( \phi''S \) is the point-wise image of \( S \) through \( \phi \) and
 \( \gamma^S_{\phi''S}\colon\sigma(S)\to \sigma(\phi''S) \) is the map induced by \( \phi\restriction S \), which is clearly a graph embedding. We are left to prove that \( G\phi \) is one-to-one.
So fix any \( [(g,S)],[(g',S')]\in GT\) such that \( [(g,S)]\neq[(g',S')]\).
By directedness of $[T]^{<\omega_1}$ we can assume that $S=S'$ without any loss of generality. One has
\begin{align*}
G\phi([(g,S)])&=[(G\gamma^S_{\phi''S} (g),\phi''S)],\\
G\phi([(g',S)])&=[(G\gamma^S_{\phi''S} (g'),\phi''S)],
\end{align*}
which are different elements of \( GV \) because \( G\gamma^S_{\phi''S} \) is injective.
\end{proof}

Now we are left to prove that \( G T\embeds^{\kappa}_\mathsf{TFA} GV\) implies that \(T\embeds_\mathsf{GRAPHS}^{\kappa} V \). Given any linear combination \( \sum k_i\phi_i \), \( k_i\in \mathbb{Z}\) and \( \phi_i\in\Hom{T}{V} \), one can define a group homomorphism \(\Psi(\sum k_i\phi_i ) \colon GT\to GV \) as follows.
For any $\phi_i$ and $S\in[T]^{<\omega_1}$,
let \( \delta_i^S \) be the function such that the diagram commutes

\begin{center}
\begin{tikzpicture}
  \matrix (m) [matrix of math nodes,row sep=3em,column sep=4em,minimum width=2em]
  {
     S& \phi_i''S \\
     \sigma(S)&\sigma(\phi_i''S)\\};
  \path[-stealth]
    (m-1-1) edge node [above] {\(\phi_i\restriction S\)} (m-1-2)
    		edge node [left] {\(\theta_S\)} (m-2-1)
	(m-1-2) edge node [right] {\(\theta_{\phi_i''S}\)} (m-2-2)
	(m-2-1) edge node [dashed, below] {\(\delta^S_i\)} (m-2-2);	
\end{tikzpicture}
\end{center}
Since $\delta^S_i$ is an arrow in $\Gamma$, it induces a group homomorphism
\begin{align*}
G\delta^S_i\colon G_{\sigma(S)} \to G_{\sigma(\phi_{i}''S)}\qquad
m \mapsto \delta^S_i\ast m.
\end{align*}
 as observed  in \eqref{eq : Ggamma}. Then we define
\[
\Psi(\sum k_i\phi_i )\colon GT\to GV\qquad
[(m,S)]\mapsto \sum k_i[(G\delta^S_i(m), \phi_i'' S)].
\]

\begin{theorem}[Prze\'zdziecki~{\cite[Theorem 3.14]{Prz14}}]\label{Theorem : almost-fullness}
There is a natural isomorphism
\[
\Psi\colon\mathbb{Z}[\Hom{T}{V}]\xrightarrow{\cong} \Hom{GT}{GV}.
\]
\end{theorem}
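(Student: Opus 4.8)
The plan is to view $\Psi$ as a natural transformation between the two bifunctors $(T,V)\mapsto\mathbb{Z}[\Hom{T}{V}]$ and $(T,V)\mapsto\Hom{GT}{GV}$ (contravariant in $T$, covariant in $V$) and to prove that it is a pointwise isomorphism. Naturality, and the fact that each $\Psi(\sum_i k_i\phi_i)$ is a well-defined additive map, are routine: they follow by unwinding the squares defining the $\delta_i^S$ together with the functoriality of $G$ and the description of the direct limit via $\sim_T$. The engine of the argument is the countable (\emph{local}) case: for objects $C,D\in\Gamma$ the assignment $\gamma\mapsto G\gamma$ extends to an isomorphism $\mathbb{Z}[\Hom{C}{D}]\cong\Hom{G_C}{G_D}$. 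I would prove this from $\End M\cong A$ (Theorem~\ref{Theorem : Corner continuum}): since $G_C=id_C\ast M$, $G_D=id_D\ast M$ with $id_C,id_D$ idempotent and $M$ a faithful module with $\End M=A$ acting on the left, any group homomorphism $h\colon G_C\to G_D$ gives $m\mapsto h(id_C m)\in\End M=A$, whence $h$ is left multiplication by a unique element of $id_D\,A\,id_C$; thus $\Hom{G_C}{G_D}\cong id_D\,A\,id_C$. A direct computation with \eqref{eq : multiplication} shows that a basis of $id_D\,A\,id_C$ is exactly the set of arrows $C\to D$ in $\Gamma$; in particular the new generators $\mathcal{P}_{fin}(\omega)$ contribute nothing, since $id_D\,F\,id_C=0$ for every finite set $F$. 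This is why adding the embeddability detectors leaves Prze\'zdziecki's computation intact, and it identifies $\Psi$ locally with $\gamma\mapsto G\gamma$.

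For injectivity, suppose $\sum_i k_i\phi_i\neq 0$ with the $\phi_i\in\Hom{T}{V}$ pairwise distinct. For each pair $i\neq j$ pick a vertex on which $\phi_i$ and $\phi_j$ differ, collect these finitely many vertices into a countable $S\in[T]^{<\omega_1}$, so that the restrictions $\phi_i\restriction S$ are pairwise distinct. Evaluating $\Psi(\sum_i k_i\phi_i)$ at $[(id_{\sigma(S)},S)]$ yields $\sum_i k_i[(\delta_i^S,\phi_i''S)]$; pushing all summands to a common stage $\sigma(S^{\ast})$ with $S^{\ast}\supseteq\bigcup_i\phi_i''S$ turns this into $[(\sum_i k_i(\gamma_i\circ\delta_i^S),S^{\ast})]$, where the $\gamma_i\circ\delta_i^S\colon\sigma(S)\to\sigma(S^{\ast})$ are pairwise distinct arrows induced by the distinct maps $\phi_i\restriction S$. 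Being a nonzero $\mathbb{Z}$-combination of distinct basis elements, this represents a nonzero element of $G_{\sigma(S^{\ast})}$, and since the bonding maps of the system are injective (Lemma~\ref{Lemma : G homomorphism}) it stays nonzero in $GV$. Hence $\Psi(\sum_i k_i\phi_i)\neq 0$.

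Surjectivity is the heart of the matter, and the plan is to reduce it to the countable source case and then assemble. Writing $GT=\varinjlim_{S\in[T]^{<\omega_1}}G_{\sigma(S)}$ gives $\Hom{GT}{GV}=\varprojlim_S\Hom{G_{\sigma(S)}}{GV}$, so it suffices to compute $\Hom{G_C}{GV}\cong\mathbb{Z}[\Hom{C}{V}]$ for countable $C$ and then to show that a compatible family of such data, as $C=S$ grows inside $T$, glues to a \emph{single} element of $\mathbb{Z}[\Hom{T}{V}]$. Given $h\colon G_C\to GV$, its value $h(id_C)$ lies in one countable stage $G_{\sigma(S')}$ because $GV$ is the $\aleph_1$-directed union of the images of the $G_{\sigma(S')}$; the crucial step is then to argue, using that $GV$ is reduced and torsion-free (equivalently $\mathbb{S}$-cotorsion-free), that $h(id_C)$ in fact lies in the free part $\mathbb{Z}[\Gamma_{\sigma(S')}\cup\mathcal{P}_{fin}(\sigma(S'))]$ and is a finite combination $\sum_i k_i\gamma_i$ of arrows $\sigma(C)\to\sigma(S')$. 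Each $\gamma_i$ corresponds to a graph homomorphism $\phi_i\colon C\to V$, and one checks $h=\Psi(\sum_i k_i\phi_i)$ by comparing the two maps on $G_C$ through the local isomorphism.

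The main obstacle is exactly the finiteness step just highlighted: a priori $h(id_C)$ could be an infinitely supported element of the $\mathbb{Z}$-adic completion, or could involve the finite-subset generators, and neither is compatible with a map of the form $\Psi(\sum_i k_i\phi_i)$. Ruling this out is where the full strength of the realization theorem is indispensable, namely $\End M\cong A$ together with the $\mathbb{S}$-reduced, $\mathbb{S}$-torsion-free structure of $M$ and of the groups $GV$; it is also where the coherence across the $\aleph_1$-directed system $[T]^{<\omega_1}$ must be controlled, so that the local finite combinations have uniformly bounded support and glue to a single finitely supported element of $\mathbb{Z}[\Hom{T}{V}]$. This content is imported essentially verbatim from \cite[Theorem 3.14]{Prz14}, the only new verification being that the detectors $\mathcal{P}_{fin}(\omega)$ do not interfere, which is guaranteed by the vanishing $id_D\,F\,id_C=0$ noted above.
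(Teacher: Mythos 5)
The paper offers no proof of this theorem---it is quoted from \cite{Prz14} and accompanied only by the remark that it ``can be proved arguing as in \cite[Section 3]{Prz14}''---and your sketch is a faithful outline of exactly that argument (local computation $\Hom{G_C}{G_D}\cong id_D\,A\,id_C$ via $\End M\cong A$, then passage to limits), including the one genuinely new verification needed here, namely that the added generators $F\in\pow_{fin}(\omega)$ satisfy $id_D\,F\,id_C=0$ and so do not perturb Prze\'zdziecki's computation. The only imprecision is in the surjectivity step, where the real difficulty is to show that a homomorphism $h\colon G_C\to GV$ itself (not merely the single value $h(id_C)$, which is all that $\aleph_1$-directedness controls for a domain of size continuum) factors through one countable stage of $GV$ before the local isomorphism can be invoked; but you correctly identify this, together with the gluing over $[T]^{<\omega_1}$, as the content imported verbatim from \cite[Theorem 3.14]{Prz14}, which is precisely what the paper does.
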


\begin{remark}
Theorem~\ref{Theorem : almost-fullness} states that \( G \) is an almost-full embedding, according to the terminology of~\cite{Prz14,GobPrz}. It can be proved arguing as in~\cite[Section 3]{Prz14}.
\end{remark}

Now we come to the point where our modification from~\eqref{eq : multiplication} becomes crucial. Since $A$ contains the finite subsets of $\omega$, we use them and the property of almost-fullness of \( G \) to detect an embedding among
\( \phi_{0},\dotsc,\phi_{n} \) when \( \Psi(\sum_{i\leq n} k_{i}\phi_{i}) \) is one-to-one.

\begin{lemma}\label{Lemma : G reduction}
For every two graphs
\(T\) and \(V\) in \( X^\kappa_\mathsf{GRAPHS} \), if \( GT\embeds^{\kappa}_\mathsf{GROUPS} GV \) holds then \( T\embeds^{\kappa}_\mathsf{GRAPHS} V \).
\end{lemma}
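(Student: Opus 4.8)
The plan is to establish the converse of Lemma~\ref{Lemma : G homomorphism} by exploiting the finite subsets of \(\omega\) that were inserted into \(A\) precisely as embeddability detectors. First I would apply Theorem~\ref{Theorem : almost-fullness}. A witness of \(GT\embeds^{\kappa}_\mathsf{GROUPS}GV\) is an injective group homomorphism \(h\colon GT\to GV\); since \(GT,GV\) are abelian, \(h\in\Hom{GT}{GV}\), so \(h=\Psi(\sum_{i\le n}k_i\phi_i)\) for finitely many pairwise distinct \(\phi_i\in\Hom{T}{V}\) and nonzero integers \(k_i\). It then suffices to prove that some \(\phi_i\) is a graph embedding, since this immediately yields \(T\embeds^{\kappa}_\mathsf{GRAPHS}V\). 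I argue by contradiction: supposing that no \(\phi_i\) is an embedding, I exhibit a nonzero element of \(\ker h\).

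The enabling observation is that, because each \(\phi_i\) is a homomorphism and \(V\) is irreflexive, \(\phi_i\) is an embedding exactly when its restriction to every finite subgraph of \(T\) is an isomorphism onto the induced image; failure of injectivity or of edge-reflection is always witnessed on a two-element non-edge. Hence for each \(i\le n\) I would fix a finite subgraph \(S_i\subseteq T\) on which \(\phi_i\restriction S_i\) is not such an isomorphism, and set \(S\coloneqq\bigcup_{i\le n}S_i\), a finite (hence countable) induced subgraph of \(T\), so \(S\in[T]^{<\omega_1}\). Since \(S\supseteq S_i\), the restriction \(\phi_i\restriction S\) is not an isomorphism onto its image for any \(i\), which by the commuting square defining \(\delta^S_i\) means that each arrow \(\delta^S_i\colon\sigma(S)\to\sigma(\phi_i''S)\) fails to be an isomorphism onto its image.

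The detector is then the full (finite) vertex set \(b\) of \(\sigma(S)\): as \(\sigma(S)\cong S\) is finite and lives over \(\omega\), we have \(b\in\pow_{fin}(\omega)\), and reading \(b\) as a basis element via \(\pow_{fin}(\sigma(S))\subseteq G_{\sigma(S)}\) I form \(z\coloneqq[(b,S)]\in GT\). Unwinding \(\Psi\) gives \(h(z)=\sum_{i\le n}k_i[(G\delta^S_i(b),\phi_i''S)]\) with \(G\delta^S_i(b)=\delta^S_i\ast b\). Now \(b\subseteq\dom\delta^S_i\) but \(\delta^S_i\restriction b=\delta^S_i\) is not an isomorphism, so in the multiplication rule~\eqref{eq : multiplication} neither the first clause (as \(b\notin\Arw{\Gamma}\)) nor the second applies, and the product lands in the ``otherwise'' case; thus \(\delta^S_i\ast b=0\) for every \(i\), whence \(h(z)=0\). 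On the other hand \(z\neq 0\): one has \(b\neq 0\) in \(G_{\sigma(S)}\), and each transition map \(\tau^S_{S'}=G\gamma^S_{S'}\) is injective because the inclusions \(S\subseteq S'\) are graph embeddings and Lemma~\ref{Lemma : G homomorphism} applies, so \(\tau^S_{S'}(b)\neq 0\) for all \(S'\supseteq S\) and therefore \([(b,S)]\neq 0\) in the direct limit. This contradicts the injectivity of \(h\), so some \(\phi_i\) is an embedding, as desired.

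I expect the one genuinely delicate point to be the simultaneous annihilation of all \(n+1\) summands: the argument works only because a single finite \(S\) can absorb every failure witness at once while its vertex set \(b\) stays finite, hence remains a legitimate generator from \(\pow_{fin}(\omega)\) --- this is exactly what the modification of the multiplication~\eqref{eq : multiplication} was designed for. The remaining verifications --- that \(\delta^S_i\ast b=0\) via the multiplication rule, and that \(z\neq 0\) via injectivity of the transition maps --- are then routine.
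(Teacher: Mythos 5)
Your proposal is correct and follows essentially the same route as the paper's proof: decompose the embedding via Theorem~\ref{Theorem : almost-fullness}, gather all failure witnesses into a single finite \( S\in[T]^{<\omega_{1}} \) by directedness, and use the vertex set of \( \sigma(S) \) (a generator from \( \pow_{fin}(\omega) \)) as a nonzero element of \( GT \) annihilated by every \( \delta^{S}_{i} \) under the multiplication rule~\eqref{eq : multiplication}. The only difference is that you explicitly justify why \( [(b,S)]\neq 0 \) via injectivity of the transition maps, a point the paper leaves implicit.
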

\begin{proof}Let $T,V$ be as in the hypothesis and \( h\colon GT\to GV \) a group embedding.
By Theorem \ref{Theorem : almost-fullness} we have
\[
h=\Psi(\sum_{i\in I} k_i\phi_i),
\]
for some linear combination of
graph homomorphisms \(\phi_{i}\in \Hom{T}{V} \).
We claim that there must be some $i\in I$ such that $\phi_i$ is a graph embedding from $T$ into $V$. Suppose that it is not true, aiming for a contradiction. Since \( [T]^{<\omega_1} \) is directed, there is some finite \( S\in[T]^{<\omega} \) such that, for every \( i\in I \), the restriction map \( \phi_i\restriction S \) is not one-to-one or does not preserve non-edges.
Call \( d \) the vertex set of \( \sigma(S) \). Such \( d \) is a finite subset of \(\omega \) and is an element of \( G_{\sigma(S)} \) because \( d=id_{\sigma(S)}\ast d \).
Now consider $[(d,S)]$, the element of  \( GT \) represented by  \( d\in G_{\sigma(S)} \). Then $[(d,S)]$ is a  nontrivial element and
\begin{align*}
h([(d,S)])=&\sum k_i[(G\delta^S_i (d), \phi_i'' S)]=\\
	=&\sum k_i[(\delta^S_i\ast d,\phi_i'' S)]=0\\
\end{align*}
because if $\phi_i\restriction S$ is not an embedding then neither is the induced
map $\delta^S_i$. This contradicts the fact that $h$ is one-to-one.
\end{proof}

Summing up the results of this section we can prove the main theorem.

\begin{proof}[Proof of Theorem \ref{Theorem : main}]
In view of the Lemma \ref{Lemma : G Borel}, we can assume that $G$ is \(\kappa^{+}\)-Borel. By Lemma~\ref{Lemma : G homomorphism}, \(G\) is a homomorphism from $\embeds_\mathsf{GRAPHS}^\kappa$ to $\embeds_\mathsf{TFA}^\kappa$, and
Lemma \ref{Lemma : G reduction} yields that $G$ is a reduction.
\end{proof}

\begin{corollary}\label{Proposition : reduction GR->TFA} For every uncountable \(\kappa\) such that \(\kappa^{<\kappa}=\kappa\), the embeddability relation \( \embeds_\mathsf{TFA}^{\kappa} \) on the \( \kappa \)-space of \(\kappa\)-sized torsion-free abelian groups is a complete \( \analytic \) quasi-order.
\end{corollary}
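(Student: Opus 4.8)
The plan is to obtain the corollary as an immediate consequence of Theorem~\ref{Theorem : main} together with the completeness of embeddability on graphs, chaining the two reductions via transitivity of \( \leq_B \). First I would record that \( \embeds_\mathsf{TFA}^{\kappa} \) is itself an \( \analytic \) quasi-order. This is already contained in the discussion of the Borel reducibility subsection: for any language \( L \) with \( |L|<\kappa \) the relation \( \embeds^{\kappa}_L \) is the projection of a closed subset of \( X_L^\kappa\times X_L^\kappa\times \pre{\kappa}{\kappa} \), hence lies in \( \analytic(X_L^\kappa\times X_L^\kappa) \). Since torsion-free abelian groups are the models of an \( L_{\kappa^+\kappa} \)-sentence over a finite language, \( X^\kappa_\mathsf{TFA} \) is a standard Borel \( \kappa \)-space and \( \embeds_\mathsf{TFA}^{\kappa} \) is \( \analytic \).

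Next, to prove completeness, I would fix an arbitrary \( \analytic \) quasi-order \( P \) on a standard Borel \( \kappa \)-space \( X \) and produce a Borel reduction \( P\leq_B\embeds_\mathsf{TFA}^{\kappa} \). By Theorem~\ref{Theorem : MilMot} the relation \( \embeds^\kappa_\mathsf{GRAPHS} \) is a complete \( \analytic \) quasi-order, so there is a \( \kappa^+ \)-Borel map \( f\colon X\to[W_\kappa]^\kappa \) witnessing \( P\leq_B\embeds^\kappa_\mathsf{GRAPHS} \). By Theorem~\ref{Theorem : main} there is a \( \kappa^+ \)-Borel map \( G\colon[W_\kappa]^\kappa\to X^\kappa_\mathsf{TFA} \) witnessing \( \embeds^\kappa_\mathsf{GRAPHS}\leq_B\embeds_\mathsf{TFA}^{\kappa} \).

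Finally I would set \( h=G\circ f \) and check that \( h \) is a Borel reduction from \( P \) to \( \embeds_\mathsf{TFA}^{\kappa} \). Preservation of the quasi-orders in both directions is immediate by composing the two reduction equivalences. The only point to verify is that \( h \) is again \( \kappa^+ \)-Borel, i.e., that the composition of two \( \kappa^+ \)-Borel functions is \( \kappa^+ \)-Borel: the \( G \)-preimage of an open set is \( \kappa^+ \)-Borel, and the \( f \)-preimage of any \( \kappa^+ \)-Borel set stays in the \( \kappa^+ \)-algebra since \( f \) is \( \kappa^+ \)-Borel. I do not expect any genuine obstacle here, as the whole mathematical content of the corollary is carried by Theorem~\ref{Theorem : main} and Theorem~\ref{Theorem : MilMot}; the corollary is precisely the transitivity of \( \leq_B \) applied to these two reductions, plus the routine remark that \( \embeds_\mathsf{TFA}^{\kappa} \) is \( \analytic \).
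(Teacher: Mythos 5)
Your proposal is correct and coincides with the paper's own argument: the corollary is obtained by combining Theorem~\ref{Theorem : main} with Theorem~\ref{Theorem : MilMot} via transitivity of \( \leq_B \), together with the observation (already made in the preliminaries) that \( \embeds_\mathsf{TFA}^{\kappa} \) is \( \analytic \). Nothing further is needed.
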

\begin{proof}
Combining Theorem \ref{Theorem : main} with Theorem \ref{Theorem : MilMot}, it follows that
\( \embeds_\mathsf{TFA}^{\kappa} \) is a complete \( \analytic \) quasi-order provided that \( \kappa^{<\kappa}=\kappa \) holds.
\end{proof}

It is worth mentioning that the analogue of Corollary~\ref{Proposition : reduction GR->TFA}, where \(\kappa=\omega\), has been proved recently in~\cite{CalTho}.


\section{The embeddability relation between \( R \)-modules}
\label{Section5}

In this section we use the second Corner's type theorem stated in section \ref{section : 3} to prove that the quasi-order of embeddability between \( R \)-modules, for any \( \mathbb{S} \)-ring \( R \) of cardinality less than the continuum, is a complete \(\analytic \) quasi-order.

\begin{proof}[Proof of Theorem~\ref{Theorem : R-mod}]
Let \( \Gamma \), \(W_{\kappa} \), and \( \sigma \) be as in section \ref{section : 4}. Define
\begin{equation}\label{eq : defA-R}
 A \coloneqq R\big[ \Arw{\Gamma}\cup\set{1}\cup\pow_{fin}{(\omega)} \big].
 \end{equation}
That is, the free \( R \)-module generated by the arrows in \( \Gamma \), a distinguished element \( 1 \) and the finite subsets of \( \omega \). The \( R \)-module \( A \)
can be endowed a ring structure by defining a multiplication on the element of its basis as in \eqref{eq : multiplication}. Such multiplication is 
compatible with the \( R \)-module structure, therefore we can regard \( A \) as an \( R \)-algebra.
Notice that \( A \) has cardinality the continuum so we apply Theorem~\ref{prop : GobTrl} which yields the existence of an \( R \)-module \( M\cong \End_{R}{A} \). We continue defining
\( G \) similarly to how we did in Section~\ref{section : 4}. That is, for every \( C \in \Gamma \),
\[
G_{C} \coloneqq id_{C}\ast M,
\]
and for every \( T\in [W_{\kappa}]^{\kappa} ,\) let
\[
G T\coloneqq \varinjlim_{S\in [T]^{<\omega_1}} G_{\sigma(S)}.
\]

Since \( |M|=|A| \), for every \( T\in [W_{\kappa}]^{\kappa} \), \( GT \) has size \( \kappa \).
Then one can argue as in Lemmas \ref{Lemma : G Borel}, \ref{Lemma : G homomorphism}, and \ref{Lemma : G reduction} to prove that \( G \) is a Borel reduction from
\( \embeds^{\kappa}_\mathsf{GRAPHS} \) to
\( \embeds^{\kappa}_{R\text{-}\mathsf{MOD}} \). In this case the almost-fullness for \( G \) was proved essentially in \cite[Theorem 3.16]{GobPrz} with the same argument used in \cite{Prz14}.
\end{proof}

\begin{corollary}
For every uncountable cardinal \(\kappa\) such that \(\kappa^{<\kappa}=\kappa\) and every ring \(R\) as in the statement of Theorem~\ref{Theorem : R-mod}, the embeddability relation  \( \embeds_{R\text{-}\mathsf{MOD}}^{\kappa} \) on the \( \kappa \)-space of \(\kappa\)-sized
\(R\)-module is a complete \( \analytic \) quasi order.
\end{corollary}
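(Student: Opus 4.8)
The plan is to deduce the statement purely formally from two ingredients already available: the Borel reduction constructed in Theorem~\ref{Theorem : R-mod} and the completeness of embeddability on graphs recorded in Theorem~\ref{Theorem : MilMot}. Since all the substantive work (the construction of the functor $G$, its Borel definability, and the almost-fullness argument guaranteeing that it is a reduction) has already been carried out in the proof of Theorem~\ref{Theorem : R-mod}, this corollary should be an immediate consequence and I expect no genuine obstacle; the entire difficulty lies upstream.

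First I would note that $\embeds^{\kappa}_{R\text{-}\mathsf{MOD}}$ is itself an $\analytic$ quasi-order, so that it is a legitimate candidate for completeness. This is exactly the observation made in the preliminaries: for any language $L$ with $|L|<\kappa$ the embeddability relation $\embeds^{\kappa}_{L}$ is the projection of a closed subset of $X^{\kappa}_{L}\times X^{\kappa}_{L}\times\pre{\kappa}{\kappa}$, hence analytic, and it is visibly reflexive and transitive; applying this with $L$ the language of $R$-modules gives the claim.

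Next I would invoke Theorem~\ref{Theorem : R-mod}, which under the standing hypotheses (namely $\kappa$ uncountable with $\kappa^{<\kappa}=\kappa$, and $R$ a commutative $\mathbb{S}$-cotorsion-free ring of cardinality less than the continuum) yields a Borel reduction $\embeds^{\kappa}_\mathsf{GRAPHS}\leq_B\embeds^{\kappa}_{R\text{-}\mathsf{MOD}}$. Combining this with Theorem~\ref{Theorem : MilMot}, which states that $\embeds^{\kappa}_\mathsf{GRAPHS}$ is a complete $\analytic$ quasi-order, the result follows by transitivity of $\leq_B$: given any $\analytic$ quasi-order $P$ on a standard Borel $\kappa$-space, completeness of $\embeds^{\kappa}_\mathsf{GRAPHS}$ gives $P\leq_B\embeds^{\kappa}_\mathsf{GRAPHS}$, and composing with the reduction of Theorem~\ref{Theorem : R-mod} gives $P\leq_B\embeds^{\kappa}_{R\text{-}\mathsf{MOD}}$. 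As $P$ was arbitrary, $\embeds^{\kappa}_{R\text{-}\mathsf{MOD}}$ is complete.

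The only point worth a moment's care is that the hypotheses of the two theorems being combined coincide with those of the corollary: Theorem~\ref{Theorem : R-mod} requires precisely $\kappa^{<\kappa}=\kappa$ together with the stated conditions on $R$, while Theorem~\ref{Theorem : MilMot} requires only $\kappa$ uncountable with $\kappa^{<\kappa}=\kappa$. Since both are granted, no additional assumptions are introduced and the argument closes. This is the exact analogue of the proof of Corollary~\ref{Proposition : reduction GR->TFA} in the torsion-free abelian setting, with Theorem~\ref{Theorem : R-mod} playing the role that Theorem~\ref{Theorem : main} played there.
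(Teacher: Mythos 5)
Your proposal is correct and follows exactly the route the paper intends: the corollary is obtained by composing the Borel reduction of Theorem~\ref{Theorem : R-mod} with the completeness of \( \embeds^{\kappa}_\mathsf{GRAPHS} \) from Theorem~\ref{Theorem : MilMot}, precisely as in the proof of Corollary~\ref{Proposition : reduction GR->TFA}. Your additional remark that \( \embeds^{\kappa}_{R\text{-}\mathsf{MOD}} \) is itself an \( \analytic \) quasi-order (so that completeness is even meaningful) is a sensible check that the paper leaves implicit.
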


\section{The isomorphism problem}

At the end of his paper Prze\'zdziecki posed the question if every two isomorphic groups in the target of the functor have isomorphic inverse images  (see \cite[Section 8]{Prz14}).
We still do not know whether after our modification the answer is positive, i.e., whether
the map \( G \) defined in Definition \ref{Definition : the reduction} is a reduction for the isomorphisms.
Then we ask a more general question in terms of Borel reducibility.

\begin{question}\label{question : iso_{TFA}}
Is there any Borel reduction from isomorphism \( \cong^{\kappa}_{\mathsf{GRAPHS}} \)
on \(\kappa\)-sized graphs to isomorphism \( \cong^{\kappa}_{\mathsf{TFA}}\)
on \(\kappa\)-sized torsion-free abelian groups?
\end{question}

Question \ref{question : iso_{TFA}} is still open even in the case \( \kappa = \omega \), where a positive answer will yield that the relation \(\cong_{\mathsf{TFA}}\) of isomorphism on countable torsion-free abelian groups is
a maximum up to Borel reducibility among all the equivalence relations
induced by a Borel action of \(S_{\infty}\) on a standard Borel space. A remarkable result in such direction is one by Hjorth, who proved in \cite{Hjo02} that \( \cong_\mathsf{TFA} \) is not Borel. In fact, this was extended by Downey-Montalb\'an \cite{DowMon}, who showed that \( \cong_\mathsf{TFA} \) is complete \(\analytic\) as a set of pairs. We ought to mention that there are several results in classical Borel reducibility concerning the isomorphism relation on torsion-free abelian groups with finite rank. For every \( n< \omega \), denote by \(\cong_{\mathsf{TFA}_{n}}\) the isomorphism on countable torsion-free abelian group of rank \(n\).
An old result by Baer~\cite{Bae37} establishes that \( \cong_{\mathsf{TFA}_{1}}\) is essentially \( E_{0} \) (i.e., it is Borel bi-reducible with \( E_{0} \)). 
Moreover Thomas proved in \cite{Tho03} that for every \( n\geq 1\),
\( (\cong_{\mathsf{TFA}_{n}}) <_{B} (\cong_{\mathsf{TFA}_{n+1}})\).

Now we go back to Question \ref{question : iso_{TFA}}. Some results of \cite{FriHytKul} and \cite{HytMor} use certain model theoretic properties of complete theories to obtain information about the isomorphism relation between the models of those.
First let us mention that in \cite{HytKul} the authors give the following example, among many others, of a complete \( \analytic \) equivalence relation in the constructible universe.
\begin{definition}
Let \( E_{\omega}^{\kappa} \) the equivalence relation defined on \(\pre{\kappa}{\kappa}\)
by
\[
x\mathbin{E_{\omega}^{\kappa}} y \iff
\setm{\alpha<\kappa}{x(\alpha)=y(\alpha)}\text{ contains an \(\omega\)-club}.
\]
\end{definition}

In \cite[Theorem 7]{HytKul}, under the assumption \( \mathrm V= \mathrm L \), the equivalence
 relation \( E_{\omega}^{\kappa} \) is shown to be complete \(\analytic\) for every inaccessible cardinal \( \kappa \).
Then, in \cite [Definition 5.4]{HytMor} the authors defined the \emph{orthogonal chain property} (\( \OCP \)) for stable theories and proved the following result.

\begin{theorem}[Hyttinen-Moreno~{\cite[Corollary 5.10]{HytMor}}]\label{Theorem : HytMor}
Assume that \(\kappa\) is inaccessible. For every stable theory \( T \) with \(\OCP\), the equivalence relation
\( E_{\omega}^{\kappa} \) reduces continuously to \(\cong_{T}^{\kappa}\).
\end{theorem}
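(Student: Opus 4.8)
The plan is to realise the reduction through a Shelah-style Ehrenfeucht--Mostowski construction. To each $x\in\pre{\kappa}{\kappa}$ one attaches a tree-like index structure $I_x$ of size $\kappa$ whose local shape at level $\alpha$ records the value $x(\alpha)$, and then lets $A_x$ be the $\kappa$-sized model of $T$ generated as an EM model over $I_x$ (built in a Skolemised expansion and then reduced to the language of $T$). Composing $x\mapsto A_x$ with a fixed coding of models into the standard Borel $\kappa$-space of models of $T$ gives the candidate reduction; it is continuous because every bounded restriction $x\restriction\alpha$ already determines $I_x$, and hence $A_x$, on a bounded part, so no single atomic fact about $A_x$ requires unbounded search. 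Since $\kappa$ is inaccessible it is in particular regular and a strong limit, so $\kappa^{<\kappa}=\kappa$ and all the EM models live in the intended cardinality $\kappa$.

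The first direction to verify is that $x\mathrel{E_\omega^\kappa}y$ implies $A_x\cong A_y$. Here one uses that the agreement set contains an $\omega$-club $C$: because $I_x$ and $I_y$ look identical at every level in $C$, one can match the two EM models by transfinite recursion running along $C$. At each $\alpha\in C$ the partial isomorphism built so far extends using agreement at $\alpha$; limit stages of cofinality $\omega$ are handled by the $\omega$-closure of $C$, and the remaining limits are automatic because an EM model is the continuous union of the submodels generated by an increasing exhaustion of its index structure. This direction is essentially bookkeeping and only uses stability to keep the pieces $\kappa$-sized.

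The hard part will be the converse, that $x\not\mathrel{E_\omega^\kappa}y$ forces $A_x\not\cong A_y$, and this is precisely what the orthogonal chain property is designed to deliver. The idea is to read off from $A_x$, at each stage $\alpha$, a dimension invariant --- the size of a maximal independent set realising a fixed regular type $p_\alpha$ over the relevant initial submodel --- arranged so that distinct values $x(\alpha)\ne y(\alpha)$ yield orthogonal types and thus dimensions that no isomorphism can identify. Using $\OCP$ one fixes a chain of regular types whose orthogonality can be detected locally, so that any isomorphism $f\colon A_x\to A_y$ must preserve the dimension invariant on a closed unbounded set of stages; the crux is that the $\omega$-limit behaviour of $f$ then forces this agreement to persist on an $\omega$-club, contradicting $x\not\mathrel{E_\omega^\kappa}y$. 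The main obstacle, where essentially all the model-theoretic content resides, is showing that orthogonality along the chain propagates through limits of cofinality $\omega$, so that the agreement set extracted from an isomorphism is genuinely $\omega$-club and not merely stationary --- this is the heart of $\OCP$ and the step I expect to occupy the bulk of the proof.
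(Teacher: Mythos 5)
The first thing to observe is that the paper contains no proof of Theorem~\ref{Theorem : HytMor}: it is imported as a black box from Hyttinen--Moreno \cite[Corollary 5.10]{HytMor}, and the only contribution of the present paper at this point is the observation (credited to Hyttinen) that the theory of \( \mathbb{Z}_p \) is stable with \(\OCP\), from which Theorem~\ref{Theorem : iso} follows. So there is no in-paper argument to compare yours against; what you have written is a blind reconstruction of an external result.

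Taken on its own terms, your outline follows the right general template for these coding arguments in generalized descriptive set theory (a model \( A_x \) attached to each \( x \), continuity from locality of the construction, isomorphism detected via dimensions of regular types at ordinals of countable cofinality, orthogonality used to separate the non-equivalent case). But it is not a proof, and you concede as much: the converse direction, which is where \(\OCP\) actually does any work, is described only as ``what the orthogonal chain property is designed to deliver'' and is explicitly deferred as the step you ``expect to occupy the bulk of the proof.'' That step is the entire content of the theorem --- without it you have only shown that \emph{some} invariant might work, not that one does. Two further concrete gaps. First, \( E_{\omega}^{\kappa} \) as defined in this paper lives on \( \pre{\kappa}{\kappa} \), so at each level \( \alpha \) you must distinguish \( \kappa \) many possible values \( x(\alpha) \), not two; you would need either \( \kappa \) many pairwise orthogonal types per level or a preliminary (continuous) reduction of \( E_{\omega}^{\kappa} \) to its restriction to \( \pre{\kappa}{2} \), and you address neither. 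Second, the forward direction is not ``essentially bookkeeping'': producing an isomorphism from agreement on an \(\omega\)-club requires a back-and-forth through suitably prime or saturated extensions, and continuing the partial isomorphism across the club many levels where \( x \) and \( y \) \emph{disagree} is exactly where the specific shape of the construction (and stability) is used. As it stands the proposal is a plausible roadmap, not a verifiable argument, and it cannot be checked against the paper because the paper deliberately does not prove this statement.
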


As it was kindly pointed out by Hyttinen to the author of this paper, the theory of \( \mathbb{Z}_{p} \) (i.e.,  the group of \( p \)-adic integers) has \(\OCP\) and is stable. Thus one obtains Theorem~\ref{Theorem : iso} as a corollary
of Theorem~\ref{Theorem : HytMor}. This gives an affirmative partial answer to
Question~\ref{question : iso_{TFA}}.


\end{document}